\documentclass[12pt,reqno]{amsart}
\setlength{\textheight}{23cm}
\setlength{\textwidth}{16cm}
\setlength{\topmargin}{-0.8cm}
\setlength{\parskip}{0.3\baselineskip}
\hoffset=-1.4cm

\usepackage{amssymb,hyperref}
\usepackage[all]{xy}

\newcommand{\op}{\operatorname}

\newtheorem{theorem}{Theorem}[section]
\newtheorem{lemma}[theorem]{Lemma}
\newtheorem{proposition}[theorem]{Proposition}
\newtheorem{corollary}[theorem]{Corollary}

\theoremstyle{definition}
\newtheorem{definition}[theorem]{Definition}
\newtheorem{remark}[theorem]{Remark}

\numberwithin{equation}{section}

\usepackage{lipsum}

\usepackage[symbol]{footmisc}

\begin{document}

\title[Fundamental groups of moduli of principal bundles]{Fundamental groups of 
moduli of principal bundles on curves}

\author[I. Biswas]{Indranil Biswas}

\address{School of Mathematics, Tata Institute of Fundamental
Research, Homi Bhabha Road, Mumbai 400005, India}

\email{indranil@math.tifr.res.in}

\author[S. Mukhopadhyay]{Swarnava Mukhopadhyay}

\address{School of Mathematics, Tata Institute of Fundamental
Research, Homi Bhabha Road, Mumbai 400005, India}
\thanks{Corresponding author: Swarnava Mukhopadhyay}

\email{swarnavaster@gmail.com, swarnava@math.tifr.res.in}

\author[A. Paul]{Arjun Paul}

\address{International Centre for Theoretical Sciences, Survey No. 151, Shivakote, Hesaraghatta Hobli, Bengaluru - 560 089, India}

\email{arjun.math.tifr@gmail.com}

\subjclass[2010]{14D23, 14D20, 14H30}

\keywords{Moduli stack of principal bundles, uniformization, moduli space, 
almost commuting triples, fundamental group.}

\begin{abstract}
Let $X$ be a compact connected Riemann surface of genus $g$, with $g\, \geq\, 2$, and let $\op{G}$ 
be a connected semisimple affine algebraic group defined over $\mathbb C$. Given any 
$\delta \, \in \, \pi_1(\op{G})$, we prove that the moduli space of semistable principal 
$\op{G}$--bundles over $X$ of topological type $\delta$ is simply connected. More generally,
if $\op{G}$ is a connected reductive complex affine algebraic group, then the fundamental group of the moduli
space is isomorphic to ${\mathbb Z}^{2gd}$, where $d$ is the complex dimension of the center of $\op{G}$.
In contrast, the fundamental group of the moduli stack of principal $\op{G}$--bundles 
over $X$ of topological type $\delta$ is shown to be isomorphic to $H^1(X,\, 
\pi_1(\op{G}))$, when $\op{G}$ is semisimple. We also compute the fundamental group of the moduli stack of principal
$\op{G}$--bundles when $\op{G}$ is reductive.
\end{abstract}

\maketitle

\section{Introduction}

Let $X$ be an irreducible smooth complex projective curve, or, equivalently, a compact
connected Riemann surface. Let $\op{G}$ be a connected reductive affine algebraic group
defined over $\mathbb C$. The topological types of holomorphic principal $\op{G}$--bundles
over $X$ are parametrized by $\pi_1(\op{G})$ (see \cite[p.~186, Proposition 1.3(a)]{BLS:94},
\cite[Section 5]{Ho}). For any $\delta\, \in\, \pi_1(\op{G})$,
let ${\rm M}^{\delta}_{\op{G}}$ denote the moduli space of semistable principal
$\op{G}$--bundles over $X$ of topological type $\delta$. These moduli spaces have been 
extensively studied for the last twenty years. Our aim here is to compute the
fundamental group of ${\rm M}^{\delta}_{\op{G}}$.

When $\text{genus}(X)\, =\, 0$, then ${\rm M}^{\delta}_{\op{G}}$ is a point; this 
follows from the facts that any holomorphic principal $\op{G}$--bundle over ${\mathbb 
C}{\mathbb P}^1$ admits a reduction of structure group to a maximal torus of $\op{G}$ 
\cite[p.~122, Th\'eor\`eme~1.1]{Gr}, and the holomorphic line bundles on ${\mathbb 
C}{\mathbb P}^1$ are classified by their degree. When $\text{genus}(X)\, =\, 1$, 
there are explicit descriptions of ${\rm M}^{\delta}_{\op{G}}$ \cite{FMW}, \cite{FM}, \cite{La}. So 
we assume that $g\, :=\, \text{genus}(X)\, >\, 1$.

There is a short exact sequence of groups
$$
1\, \longrightarrow\, [\op{G},\, \op{G}]\, \longrightarrow\,\op{G}
\, \stackrel{q}{\longrightarrow}\, \op{Q}\,:=\, \op{G}/[\op{G},\, \op{G}]
\,\cong\, ({\mathbb G}_m)^d\, \longrightarrow\, 1\, ,
$$
where $d$ is the dimension of the center of $\op{G}$. Let
$$
J^{\alpha}_{\op{Q}}(X)\, \cong\, \text{Pic}^0(X)^d
$$
be the moduli space of all holomorphic principal $\op{Q}$--bundles on $X$ of
topological type $\alpha\,=\, q_*(\delta)$. The above homomorphism $q$ induces
a morphism of moduli spaces
$$
\widetilde{q}\, :\, {\rm M}^{\delta}_{\op{G}}\,\longrightarrow\,
J^{\alpha}_{\op{Q}}(X)
$$
which is in fact an \'etale locally trivial fibration (see
the proof of Corollary \ref{cor4}). We prove the following
(see Corollary \ref{cor4}).

\begin{theorem}\label{thm0}
The homomorphism of fundamental groups
$$
\widetilde{q}_*\, :\, \pi_1({\rm M}^{\delta}_{\op{G}})\,\longrightarrow\,
\pi_1(J^{\alpha}_{\op{Q}}(X))\,\cong\, {\mathbb Z}^{2gd}
$$
induced by the above projection $\widetilde{q}$ is an isomorphism.
\end{theorem}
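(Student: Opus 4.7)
The plan is to compute $\pi_1({\rm M}^\delta_{\op{G}})$ from the long exact homotopy sequence of the fibration $\widetilde{q}$, using as key input the main theorem of the paper -- that for a connected semisimple group, the moduli space of semistable principal bundles of any fixed topological type on $X$ is simply connected. Set $\op{G}'\,:=\,[\op{G},\op{G}]$, which is connected semisimple. The first step is to identify the typical fibre $F$ of $\widetilde{q}\colon {\rm M}^\delta_{\op{G}} \to J^\alpha_{\op{Q}}(X)$. Over a point $\xi \in J^\alpha_{\op{Q}}(X)$, the fibre consists of (isomorphism classes of) semistable principal $\op{G}$-bundles $E$ with $E\times^{\op{G}}\op{Q}\cong\xi$. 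After fixing any single lift $E_{0}$ of $\xi$, other lifts should be in bijection with principal $\op{G}'$-bundles of an appropriate topological type $\delta'\in\pi_1(\op{G}')$ determined by $\delta$: using $\op{G}'\triangleleft\op{G}$, the twist of $E_{0}$ by a $\op{G}'$-bundle (via the conjugation action of $\op{G}$ on $\op{G}'$) is a $\op{G}$-bundle with the same $\op{Q}$-quotient. This identifies $F$ with ${\rm M}^{\delta'}_{\op{G}'}$, to which the main theorem applies, yielding $\pi_1(F)=1$; connectedness of $F$ is standard.

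Next, since $\widetilde{q}$ is étale locally trivial, on analytifications it is a topological fibre bundle: an étale trivialization $U\to J^\alpha_{\op{Q}}(X)$ induces an analytic local trivialization over a neighbourhood of every point, because any étale morphism between smooth complex varieties is a local biholomorphism and hence admits local analytic sections. The associated homotopy long exact sequence reads
$$
\pi_1(F)\,\longrightarrow\,\pi_1({\rm M}^\delta_{\op{G}})\,\stackrel{\widetilde{q}_*}{\longrightarrow}\,\pi_1(J^\alpha_{\op{Q}}(X))\,\longrightarrow\,\pi_0(F)\,,
$$
and with $\pi_1(F)=0$ and $\pi_0(F)=0$ one concludes that $\widetilde{q}_*$ is an isomorphism. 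Finally, $\pi_1(J^\alpha_{\op{Q}}(X))\cong\mathbb{Z}^{2gd}$ because $J^\alpha_{\op{Q}}(X)\cong\text{Pic}^0(X)^d$ is a complex torus of dimension $gd$.

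The main obstacle will be the clean identification of $F$ with ${\rm M}^{\delta'}_{\op{G}'}$. One has to check that $\op{G}$-semistability on the fibre is equivalent to $\op{G}'$-semistability of the twist (which should follow from the fact that the fixed $\op{Q}$-quotient contributes zero slope to any reduction), pin down the correct topological type $\delta'\in\pi_1(\op{G}')$ lifting $\delta$, and handle the possible non-existence of a global lift $E_{0}$ of $\xi$. For the last point, étale local triviality of $\widetilde{q}$ provides a lift after an étale base change, which suffices for the topological argument once one has assembled the pieces above.
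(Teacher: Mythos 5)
The overall skeleton of your argument (\'etale local triviality of $\widetilde{q}$ gives a topological fibre bundle on analytifications, then the homotopy exact sequence together with $\pi_1(J^{\alpha}_{\op{Q}}(X))\cong\pi_1(\op{Pic}^0(X)^d)\cong{\mathbb Z}^{2gd}$) is exactly the paper's route to Corollary \ref{cor4}. The genuine gap is your identification of the typical fibre $F$ of $\widetilde{q}$ with an untwisted moduli space ${\rm M}^{\delta'}_{\op{G}'}$, where $\op{G}'=[\op{G},\op{G}]$: this is false in general. Already for $\op{G}=\op{GL}(2,{\mathbb C})$ and $\delta$ corresponding to odd degree $d$, the fibre of $\widetilde{q}=\det$ over $L\in \op{Pic}^d(X)$ is the fixed-determinant moduli space of semistable rank two bundles with determinant $L$, which is smooth (stable $=$ semistable), whereas $\pi_1(\op{SL}(2,{\mathbb C}))$ is trivial, so the only candidate ${\rm M}^{\delta'}_{\op{G}'}$ is the trivial-determinant moduli space, which is singular for $g\geq 3$ and equal to ${\mathbb P}^3$ for $g=2$ (not the intersection of two quadrics in ${\mathbb P}^5$). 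Your proposed repair by ``twisting $E_0$ by a $\op{G}'$-bundle via conjugation'' does not work: lifts of $\xi$ through a fixed lift $E_0$ are governed by the nonabelian cohomology of the adjoint group scheme $E_0(\op{G}')$ (the twisted form of $\op{G}'$ associated to $E_0$ by conjugation), not by $H^1(X,\op{G}')$; moreover neither $(g,h)\mapsto gh$ nor conjugation is a group homomorphism $\op{G}\times\op{G}'\to\op{G}$, so the operation you describe is not an extension of structure group, and even a set-theoretic bijection of bundles would not identify $F$ with ${\rm M}^{\delta'}_{\op{G}'}$ as a moduli space compatibly with semistability, S-equivalence and families.

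What the fibre actually is --- and what the paper uses --- is a \emph{twisted} moduli space of semistable $\op{G}'$-bundles (fixed ``determinant'' data as in the construction of ${\rm M}^{\delta}_{\widetilde{\op{G}}}$), and establishing that such twisted spaces are simply connected is precisely the content of Proposition \ref{prop1}, Theorem \ref{thm1} and Corollary \ref{cor3}, proved via the action of $\Gamma=H^1(X,\op{Z})$ with sufficiently many fixed points, using \cite{Am} and \cite{BFM}. Hence invoking Corollary \ref{cor3} for an untwisted ${\rm M}^{\delta'}_{\op{G}'}$ does not suffice: the missing and central input is the simple connectedness of the twisted fibre, which your proposal never establishes. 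Once the identification of $F$ is corrected to ``$F$ is a twisted moduli space of semistable $[\op{G},\op{G}]$-bundles'' and its simple connectedness is supplied (as in the paper), the remainder of your argument coincides with the paper's proof.
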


Theorem \ref{thm0} actually extends to the more general case of any connected complex
affine algebraic group (see Remark \ref{remf}). As a consequence of Theorem \ref{thm0} we know that if $\op{G}$ is semisimple, then ${\rm 
	M}^{\delta}_{\op{G}}$ is simply connected. 

We note that Theorem \ref{thm0} was proved earlier in \cite{BLR} under the
assumption that $\delta\, =\, 1$. The method of \cite{BLR} does not extend when
$\delta$ is nontrivial; the crucial Lemma 2.4 in \cite{BLR} fails to extend (also
Corollary 2.2 in \cite{BLR} does not extend).

The proof of Theorem \ref{thm0} uses uniformization theorems \cite{BLS:94, DS, KNR}, for moduli stack of bundles 
and unirationality of $\op{M_G^{\delta}}$ for a semi-simple group $\op{G}$. For example, if we take 
$\op{G}\,=\,\op{SL}(r)$ and $\delta$ be an integer coprime to $r$, then it is well known that the corresponding moduli space is a 
projective, smooth, unirational Fano variety \cite{BL, BLS:94, KNR} and hence simply connected \cite{Serre, Kollar}. However the varieties 
$\op{M_G^{\delta}}$ for general $\op{G}$ and $\delta$ are not always smooth and hence we need use different methods 
to address these issues.

We first consider, the fundamental group of the moduli stack $\mathcal{M}^{\delta}_{\op{G}}$ (see Sections 
\ref{sec:fundtwistedstack} for a definition) of principal $\op{G}$--bundles over $X$ of topological type $\delta$. 
We prove the following (see Theorem \ref{fundamentalgroup}):

\begin{theorem}The fundamental group $\pi_1(\mathcal{M}^{\delta}_{\op{G}})$ 
is isomorphic to $H^1(X,\, \pi_1(\op{G}))$ when $\op{G}$ is semisimple.
\end{theorem}
 
It should be mentioned that more generally, when $\op{G}$ is reductive, the fundamental group of
the moduli stack of principal $\op{G}$--bundles over $X$ is computed in Corollary
\ref{fundamentalreductivestack}. As an example if we take $\op{G=PGL}(r)$, then for any $\delta$, the fundamental group of the moduli stack is $(\mathbb{Z}/r\mathbb{Z})^{2g}$, where as the corresponding moduli space is simple connected. 

To give a rough reason why $\pi_1({\rm M}^{\delta}_{\op{G}})$ vanishes for
$\op{G}$ semisimple, first consider 
the action of the group $H^1(X,\, \pi_1(\op{G}))$ on any twisted moduli space (see Sections \ref{sec:fundtwistedstack} and \ref{sec:twistMS} for definitions) of 
semistable principal $\widetilde{\op{G}}$--bundles on $X$, where $\widetilde{\op{G}}$ 
denotes the universal cover of $\op{G}$. This action has the property that the
subgroup of $H^1(X,\, \pi_1(\op{G}))$ generated by all the isotropy subgroups is 
$H^1(X,\, \pi_1(\op{G}))$ itself. As a consequence of a general result of \cite{Am},
this makes the corresponding quotient by $H^1(X,\, \pi_1(\op{G}))$,
of the twisted moduli space under consideration,
a simply connected space, because the twisted moduli space is simply connected. Finally,
the quotient by $H^1(X,\, \pi_1(\op{G}))$ of a twisted moduli space of
semistable principal $\widetilde{\op{G}}$--bundles
is isomorphic to the moduli space ${\rm M}^{\delta}_{\op{G}}$, where $\delta \,\in \,
\pi_1(\op{G})$ is the element used in the construction of the twisted moduli space under
consideration.

We now give an application of Theorem \ref{thm0}. If $Y$ is a proper variety over an
algebraically closed field, there is an isomorphism
$$
\op{Hom}(\pi_{1}^{{\acute{e}t}}(Y),\, \mathbb{Z}/n\mathbb{Z})\,
\stackrel{\cong}{\longrightarrow}\,H^1_{{\acute{e}t}}(Y,\,\mathbb{Z}/n\mathbb{Z})
$$
for any $n$. From the long exact sequence of cohomologies associated to the short exact sequence of groups
$$
0\,\longrightarrow\, \mathbb{Z}/n\mathbb{Z}\,\longrightarrow\, \mathbb{G}_m
\,\stackrel{z\mapsto z^n}{\longrightarrow}\, \mathbb{G}_{m}\,\longrightarrow\, 0,
$$
it follows that $H^1_{{\acute{e}t}}(Y,\,\mathbb{Z}/n\mathbb{Z})$ is isomorphic to the 
$n$-torsion part
$$H^1_{{\acute{e}t}}(Y,\, \mathbb{G}_m)[n]\, .$$
Consequently,
using a generalization of Hilbert Theorem 90 (\cite[p.~124, Proposition~4.9]{Milne}),
it follows that
$$\op{Hom}(\pi_{1}^{\acute{e}t}(Y),\, \mathbb{Z}/n\mathbb{Z})
\,\simeq\, \op{Pic}(Y)[n]\, .$$
Now setting $Y\,=\,\op{M}_{\op{G}}^{\delta}$, where $\op{G}$ is connected semisimple 
affine algebraic group over $\mathbb{C}$, the following corollary of Theorem 
\ref{thm0} is obtained.

\begin{corollary}
The Picard group of $\op{M}_{\op{G}}^{\delta}$ is torsion-free. 
\end{corollary}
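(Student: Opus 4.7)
The plan is to combine Theorem \ref{thm0} with the chain of cohomological isomorphisms assembled just above the statement of the corollary in the introduction.

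First, since $\op{G}$ is semisimple, one has $[\op{G},\op{G}]=\op{G}$, so the abelian quotient $\op{Q}=\op{G}/[\op{G},\op{G}]$ is trivial and the integer $d$ (the dimension of the center of $\op{G}$) equals zero. Theorem \ref{thm0} then yields
$$
\pi_{1}(\op{M}^{\delta}_{\op{G}})\,\cong\,\pi_{1}(J^{\alpha}_{\op{Q}}(X))\,\cong\,\mathbb{Z}^{2gd}\,=\,0.
$$

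Second, I would pass to the \'etale fundamental group. Since $\op{M}^{\delta}_{\op{G}}$ is a complex projective variety, the Riemann existence theorem identifies $\pi_{1}^{\acute{e}t}(\op{M}^{\delta}_{\op{G}})$ with the profinite completion of $\pi_{1}(\op{M}^{\delta}_{\op{G}})$, and therefore this \'etale fundamental group is also trivial. Consequently $\op{Hom}(\pi_{1}^{\acute{e}t}(\op{M}^{\delta}_{\op{G}}),\,\mathbb{Z}/n\mathbb{Z})\,=\,0$ for every positive integer $n$.

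Third, I would feed this vanishing into the chain of identifications recalled in the introduction, namely
$$
\op{Hom}(\pi_{1}^{\acute{e}t}(Y),\,\mathbb{Z}/n\mathbb{Z})\,\cong\,H^{1}_{\acute{e}t}(Y,\,\mathbb{Z}/n\mathbb{Z})\,\cong\,H^{1}_{\acute{e}t}(Y,\,\mathbb{G}_m)[n]\,\cong\,\op{Pic}(Y)[n],
$$
applied with $Y=\op{M}^{\delta}_{\op{G}}$, to conclude that $\op{Pic}(\op{M}^{\delta}_{\op{G}})[n]=0$ for every $n\geq 1$. Torsion-freeness of the Picard group follows at once.

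The main obstacle is essentially bookkeeping of regularity hypotheses rather than a substantial argument: one has to verify that $\op{M}^{\delta}_{\op{G}}$ is well-behaved enough for the identifications $H^{1}_{\acute{e}t}(Y,\mathbb{G}_m)\cong\op{Pic}(Y)$ and $\pi_{1}^{\acute{e}t}(Y)\cong \widehat{\pi_{1}(Y)}$ to apply. Since $\op{M}^{\delta}_{\op{G}}$ is a projective (in particular proper, of finite type) $\mathbb{C}$-variety, both statements are standard and the argument is otherwise purely formal.
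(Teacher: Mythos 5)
Your proposal is correct and follows essentially the same route as the paper: deduce simple connectivity of $\op{M}_{\op{G}}^{\delta}$ from Theorem \ref{thm0} (for semisimple $\op{G}$ the quotient $\op{Q}$ is trivial, so $d=0$), and then feed this into the chain $\op{Hom}(\pi_1^{\acute{e}t}(Y),\mathbb{Z}/n\mathbb{Z})\cong H^1_{\acute{e}t}(Y,\mathbb{Z}/n\mathbb{Z})\cong \op{Pic}(Y)[n]$ assembled in the introduction. The only point you make explicit that the paper leaves implicit is the Riemann existence / profinite-completion comparison identifying $\pi_1^{\acute{e}t}(\op{M}_{\op{G}}^{\delta})$ with the completion of the trivial topological fundamental group, which is indeed the needed (and standard) bridge.
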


If $\op{G}$ is simply connected, the Picard group of $\op{M}_{\op{G}}$ is known to 
be $\mathbb{Z}$ \cite{KNR}. A result of \cite{BLS:94} says that the Picard group of
$\op{M}_{\op{G}}^{\delta}$ is torsion-free if $\op{G}$ is a classical semisimple group.

\section{Fundamental group of the moduli stack}

Let $\op{G}$ be a connected, reductive affine algebraic group defined over $\mathbb C$. Let $X$ be an irreducible 
smooth complex projective curve. The moduli stack of principal $\op{G}$-bundles on $X$ will be denoted by 
$\mathcal{M}_{\op{G}}$. It is well known that the stack $\mathcal{M}_{\op{G}}$ is algebraic \cite{LM}.

\subsection{Uniformization}Let $\op{G}$ be a connected, semi-simple affine algebraic group defined over $\mathbb 
C$. We now recall the uniformization theorem that
describes $\mathcal{M}_{\op{G}}$ as a quotient of the affine
Grassmannian \cite{BL}, \cite{F}, \cite{KNR}. Let $\op{LG}$ denote the loop group 
viewed as an ind--scheme over $\mathbb{C}$; we note that the set of
$\mathbb{C}$--points of $\op{LG}$ is just $\op{G}(\mathbb{C}((t)))$. The group of 
positive loops (respectively, the $\mathbb{C}$-valued points of the groups of positive loops) will be denoted by
$\op{L^{+}G}$ (respectively, $\op{G}(\mathbb{C}[[t]])$). The quotient
\begin{equation}\label{e1}
\mathcal{Q}_{\op{G}}\, :=\, \op{LG}/\op{L^{+}G}
\end{equation}
is the affine Grassmannian. The universal cover of $\op{G}$ will be denoted by $\widetilde{\op{G}}$.
The kernel of the projection map $\widetilde{\op{G}}\,\longrightarrow\, \op{G}$ 
is isomorphic to the fundamental group $\pi_1(\op{G})$.

Fix a point $p \,\in\, X$. Let
$\op{L_XG}$ denote the ind-sub group of $\op{LG}$ whose set of $\mathbb{C}$-valued points is $$\op{G}(\mathcal{O}_X(\ast p))
\,=\, \op{G}(\mathcal{O}_{X\setminus\{p\}})\, \subset\,
\op{G}(\mathbb{C}((t)))\, .$$ The first part of the following result is standard and can be found in (\cite{KNR}, \cite{BL}, \cite{F}),
while the second part is proved in \cite{DS}.

\begin{proposition}
There is a canonical isomorphism between the stacks
$\mathcal{M}_{\op{G}}$ and ${\op{L_X G}}\backslash \mathcal{Q}_{{\op{G}}}$. Moreover, the
quotient map $\mathcal{Q}_{{\op{G}}}\, \longrightarrow\, \mathcal{M}_{\op{G}}$ is locally
trivial in the \'etale topology.
\end{proposition}

We now recall some well known results on the objects described above; see Lemma 1.2 in \cite[p. 185]{BLS:94}.

\begin{proposition}[{\cite{BLS:94}}]\label{uniformization}
Let $X$ be an irreducible smooth complex projective curve and $\op{G}$ a connected semisimple complex 
affine algebraic group. Then the following four hold: 
\begin{enumerate}
\item $\pi_0(\op{LG})\,=\, \pi_1(\op{G})$. 

\item The quotient morphism $\op{LG}\,\longrightarrow\, \mathcal{Q}_{\op{G}}$ induces a
bijection $\pi_0(\op{LG})\,\longrightarrow\, \pi_0(\mathcal{Q}_{\op{G}})$. Each connected component of 
$\mathcal{Q}_{\op{G}}$ is isomorphic to $\mathcal{Q}_{\widetilde{\op{G}}}$ (defined
as in \eqref{e1} by substituting $\widetilde{\op{G}}$ in place of $\op{G}$). As before,
$\widetilde{\op{G}}$ denotes the simply connected cover of $\op{G}$. 

\item The group $\pi_0(\op{L_XG})$ is canonically isomorphic to 
$H^1(X,\,\pi_{1}({\op{G}}))$, i.e.
$$\pi_0(\op{L_XG})\cong H^1(X,\,\pi_{1}({\op{G}})).$$
Further via the universal coefficients theorem in cohomology, we get 
$$H^1(X;\,\pi_1(\op{G}))\,\cong\, \operatorname{Hom}(H_1(X,\,\mathbb{Z}),\,\pi_1(\op{G}))
\,=\,\operatorname{Hom}(\mathbb{Z}^{2g},\,\pi_1(\op{G}))\,\cong\, (\pi_1(\op{G}))^{2g}.$$

\item The group $\op{L_XG}$ is contained in the neutral component $(\op{LG})^0$ of 
$\op{LG}$.
\end{enumerate}
\end{proposition}

By Proposition \ref{uniformization} (cf.\ 
\cite[p. 186, Proposition~1.3]{BLS:94}), the set of connected components 
$\pi_0(\mathcal{M}_{\op{G}})$ has a canonical bijection
with the fundamental group $\pi_1(\op{G})$.

\begin{definition}\label{def:nontriv}
For any $\delta \,\in\, \pi_1(\op{G})$, let $\mathcal{M}^{\delta}_{\op{G}}$ denote the connected component of 
$\mathcal{M}_{\op{G}}$ corresponding to $\delta$. The component of $\op{LG}(\mathbb{C})$ corresponding to $\delta \in \pi_1(\op{G})$ will be denoted by $\op{LG}^{\delta}(\mathbb{C})$. 

 \end{definition}
Let $\zeta$ be any element in the component 
$\op{LG}^{\delta}(\mathbb{C})$.
By Proposition \ref{uniformization}(2), we get an action of 
$\zeta^{-1}\op{L_XG}\zeta$ on $\mathcal{Q}_{\widetilde{\op{G}}}$. We now recall the uniformization theorem for each 
component $\mathcal{M}_{\op{G}}^{\delta}$ \cite[Proposition 1.3(b)]{BLS:94}. The second statement of the following 
proposition is derived from \cite{DS}.

\begin{proposition}\label{twisteduniformization}For each $\delta \,\in\, \pi_1(\op{G})$, let $\zeta$ be any element in the
component $\op{LG}^{\delta}(\mathbb{C})$ (see Definition \ref{def:nontriv}).
There is a canonical isomorphism of stacks
$$\mathcal{M}^{\delta}_{\op{G}}\,\simeq\,(\zeta^{-1}\op{L_XG}\zeta)
\backslash \mathcal{Q}_{\widetilde{\op{G}}}\, .$$
Moreover the quotient map $\pi\,:\,\mathcal{Q}_{\widetilde{\op{G}}}\,\longrightarrow\,
\mathcal{M}^{\delta}_{\op{G}}$ is locally trivial in the \'etale topology. 
\end{proposition}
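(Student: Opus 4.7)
The plan is to derive the first statement from Proposition \ref{uniformization} via a left-translation trick, and the second from the theorem of Drinfeld--Simpson. First, I would observe that by part (4) of Proposition \ref{uniformization} the subgroup $\op{L_XG}$ lies in the neutral component of $\op{LG}$, so its left action on $\mathcal{Q}_{\op{G}}$ preserves each connected component $\mathcal{Q}_{\op{G}}^{\delta}$. Combined with part (1), the isomorphism $\mathcal{M}_{\op{G}} \simeq \op{L_XG} \backslash \mathcal{Q}_{\op{G}}$ therefore restricts component by component to
$$\mathcal{M}_{\op{G}}^{\delta} \,\simeq\, \op{L_XG} \backslash \mathcal{Q}_{\op{G}}^{\delta}.$$

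Next, I would use the chosen $\zeta$ to trivialize the topological twist. Because left multiplication by an element of $\op{LG}^{\delta}$ shifts components by $\delta$ in $\pi_0(\op{LG}) \simeq \pi_1(\op{G})$, the map $x \mapsto \zeta^{-1} x$ defines an isomorphism $\mathcal{Q}_{\op{G}}^{\delta} \,\simeq\, \mathcal{Q}_{\op{G}}^{0}$; by part (2) of Proposition \ref{uniformization}, the target is canonically identified with $\mathcal{Q}_{\widetilde{\op{G}}}$. Under this composite identification, the action of $\ell \in \op{L_XG}$ on the source corresponds to the action of $\zeta^{-1} \ell \zeta$ on $\mathcal{Q}_{\widetilde{\op{G}}}$, thanks to the identity $\zeta^{-1}(\ell x) = (\zeta^{-1} \ell \zeta)(\zeta^{-1} x)$. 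Passing to quotient stacks yields the asserted isomorphism $\mathcal{M}_{\op{G}}^{\delta} \simeq (\zeta^{-1} \op{L_XG} \zeta) \backslash \mathcal{Q}_{\widetilde{\op{G}}}$.

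For the second assertion, the plan is to invoke the theorem of Drinfeld--Simpson \cite{DS}: given any family of principal $\op{G}$-bundles on $X \times S$, there exists an étale cover $S' \to S$ over which the family is trivializable on the complement $(X \setminus \{p\}) \times S'$. Together with the automatic trivialization on the formal disk around $p$, this produces an $S'$-point of $\mathcal{Q}_{\op{G}}$ lying above the given $S$-point of $\mathcal{M}_{\op{G}}$; combined with the left-translation construction of the first part, this provides the required étale-local section of $\pi$. The main obstacle in the whole argument is not a single deep step but the stack-theoretic bookkeeping: one must verify that the pointwise identifications above are functorial in the test scheme $S$ and equivariant for the appropriate group actions. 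Given the concrete group-theoretic nature of left multiplication and of the covering $\widetilde{\op{G}} \to \op{G}$, this verification is routine once set up.
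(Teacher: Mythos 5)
Your argument is correct and is essentially the standard one: the paper itself offers no proof of this proposition, simply citing \cite{BLS:94} for the componentwise uniformization and \cite{DS} for the \'etale local triviality, and your translation-by-$\zeta$ trick (using parts (1), (2) and (4) of Proposition \ref{uniformization}) together with the Drinfeld--Simpson trivialization is exactly the argument of \cite{BLS:94} that the citation points to. So your proposal matches the intended proof; no gap to report.
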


\subsection{Fundamental Groups }

The quotient $\mathbb C$--space $\mathcal{Q}_{\op{G}}$ in \eqref{e1} as constructed in 
the works of Beauville-Laszlo, Kumar and Laszlo-Sorger, \cite{BL}, \cite{K}, \cite{LS}, is an 
ind--scheme, which is a direct limit of a sequence of projective schemes. 
It turns out that when $\op{G}$ is simply connected, the ind--scheme $\mathcal{Q}_{\op{G}}$ is both reduced 
and irreducible, hence it is integral \cite[p.~508, Proposition 4.6]{LS}, \cite[p.~406--407, Lemma 6.3]{BL}. 
The affine Grassmannian $\mathcal{Q}_{\op{G}}=\op{LG}/\op{L}^{+}\op{G}$ can be realized as an 
inductive limit of reduced projective Schubert varieties \cite{K}, \cite{M}.

\begin{remark}
We do not need to assume that $\op{G}$ is semisimple for defining $\op{LG}$. 
The same definition works for any reductive group $\op{G}$.
\end{remark}

We now recall a lemma (Lemma \ref{newlemma}) whose proof can be found in Section 8 of \cite{PS} for 
$\op{G}=\op{GL}_n$. The general case follows from more general results in Section 4 of \cite{Na}. 
We also refer the reader to Theorem 1.6.1 and the paragraph after Theorem 1.6.1 in \cite{Zhu} for a 
more comprehensive discussion.

\begin{lemma}\label{newlemma}
The affine Grassmannian $\mathcal{Q}_{\op{G}}$ is homotopic to the based loop group $\Omega_{e}(K_G)$, where
$K_G$ is a compact form of $\op{G}$.
\end{lemma}

The following lemma is a direct consequence of Lemma \ref{newlemma}. 

\begin{lemma}\label{simplyconnected}
Assume that ${\op{G}}$ is semisimple and simply-connected.
Then $\pi_1(\mathcal{Q}_{{\op{G}}})$ is trivial.
\end{lemma}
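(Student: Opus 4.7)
The plan is to use the homotopy equivalence $\mathcal{Q}_{\op{G}} \simeq \Omega_e(K)$ flagged in the paragraph preceding the lemma, with $K \subset \op{G}$ a maximal compact subgroup (a theorem of Pressley--Segal, invoked exactly this way in \cite{KNR}). Once this identification is granted, the computation reduces to standard algebraic topology of compact Lie groups.

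First I would observe that $\Omega_e(K)$ is path-connected: since $\op{G}$ is simply connected, its compact form $K$ is also simply connected, and therefore $\pi_0(\Omega_e K) \cong \pi_1(K) = 0$. This is consistent with the fact, recalled in the paragraph above the lemma, that $\mathcal{Q}_{\op{G}}$ is irreducible and hence connected.

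Next I would apply the loop space adjunction $\pi_n(\Omega_e K) \cong \pi_{n+1}(K)$ at $n=1$, giving
$$\pi_1(\mathcal{Q}_{\op{G}}) \;\cong\; \pi_1(\Omega_e K) \;\cong\; \pi_2(K).$$
I would then invoke the classical theorem of \'Elie Cartan that $\pi_2$ of any finite-dimensional Lie group vanishes; applied to $K$ this gives $\pi_2(K)=0$, whence $\pi_1(\mathcal{Q}_{\op{G}})=0$. Note that simplicity of $\op{G}$ is not strictly needed for this argument; simple-connectedness alone suffices, and indeed a semisimple hypothesis is all that enters through the structure of $K$.

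The main obstacle is the comparison step itself: justifying that the ind-scheme $\mathcal{Q}_{\op{G}}$, constructed as a direct limit of projective Schubert varieties, is homotopy equivalent as a topological space to the analytic based loop group $\Omega_e K$. If one wished to bypass that machinery, an alternative is the Bruhat-type decomposition of $\mathcal{Q}_{\op{G}}$ into Schubert cells: each cell is an affine space over $\mathbb{C}$, so has even real dimension, and the unique zero-dimensional stratum provides a basepoint with no attached $1$-cells. The resulting CW structure has a simply connected $2$-skeleton and therefore forces $\pi_1(\mathcal{Q}_{\op{G}})=0$ directly, without any appeal to Cartan's theorem.
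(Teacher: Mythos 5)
Your main argument is essentially the paper's own: the lemma is deduced from the homotopy equivalence $\mathcal{Q}_{\op{G}}\,\simeq\,\Omega_e(K)$ cited from \cite{KNR}, combined with the standard identifications $\pi_1(\Omega_e K)\,\cong\,\pi_2(K)\,=\,0$ (Cartan), which is exactly how the paper justifies the statement. Your alternative argument via the Bruhat decomposition of $\mathcal{Q}_{\op{G}}$ into affine cells of even real dimension (a unique $0$-cell and no $1$-cells, hence a simply connected $2$-skeleton) is also correct and more self-contained, but it is not the route the paper takes.
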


\subsubsection{Topological Stacks}\label{fundamentalstack}

We refer the reader to papers of Behrang Noohi \cite{No1, No2, No3} for the notion of topological stacks and its associated homotopy 
theory. Topological stacks are defined in Section 13.2 in \cite{No1} and homotopy groups of topological stacks are discussed in 
Section 17 in \cite{No1}. We also refer the reader to Section 5.1 in \cite{No3} for more discussion of higher homotopy groups.

In \cite[Section 20]{No1}, the author constructs a functor that takes an algebraic stack over $\mathbb{C}$ to a 
topological stack (see Proposition 20.2 in \cite{No1}). Moreover this functor has nice properties --- it sends 
smooth morphisms to local fibrations and \'etale morphisms to local homeomorphisms. The stacks 
$\mathcal{M}_{\op{G}}^{\delta}$ are all algebraic (admitting locally finite presentation over $\mathbb{C}$) and 
hence in particular topological. Moreover, the affine Grassmannian $\mathcal{Q}_{\widetilde{\op{G}}}$ has a natural 
topology coming from its ind-variety structure. 
We refer the reader to Section \ref{sec:topological} for a proof of the following proposition. 

\begin{proposition}\label{prop:topological} 
The natural morphism $\pi\,:\, \mathcal{Q}_{\widetilde{\op{G}}}\,\longrightarrow\, \mathcal{M}_{\op{G}}^{\delta}$ 
in Proposition \ref{twisteduniformization} gives a morphism between the corresponding topological stacks. 
\end{proposition}

The long exact sequence in homotopy associated to a ``Serre fibration" of 
topological stacks can be found in Section 5.2 in \cite{No3}. We also refer the reader to Section 4.2 in 
\cite{No3} for discussions on quotient stacks. Throughout this paper, we consider the fundamental group of an 
algebraic stack to be the fundamental group of the associated topological stack. The following lemma is due to 
Behrang Noohi.

\begin{lemma}\label{topostackcomm}
Let $\mathfrak{X}$ be a filtered topological stack with filtration given by $\{\mathfrak{X}_i\}_{i\in \mathbb{N}}$
and $\mathfrak{X}\,=\,\cup_{i\in I} \mathfrak{X}_i$, then $\pi_1(\mathfrak{X})\,=\,\lim \pi_1(\mathfrak{X}_i)$.
\end{lemma}

\begin{proof}
We shall use the notion of the classifying space $f\,:\, X'\,\longrightarrow\, {\mathfrak X}$ for any 
topological stack \cite{No2}. This $X'$ is a topological space, and $f$ is a 
(representable) morphism with the property that the base extension $f_T$ of $f$, along any 
morphism $T\,\longrightarrow\, {\mathfrak X}$ with $T$ a topological space $T$, is a weak equivalence of 
topological spaces. We refer the reader to \cite{No2} for all these notions and the existence 
of such a topological space $X'$.
	
So we choose one classifying space, and let $\{X'_i\}_{i\in\mathbb{N}}$ be the filtration induced on $X'$ via 
pull back. Since each $X'_i\,\longrightarrow\, {\mathfrak X}_i$ is a weak equivalence, the result now reduces to the same 
statement of the lemma for topological spaces.
\end{proof}

\begin{theorem}\label{fundamentalgroup}
Assume the group $\op{G}$ to be a semisimple affine algebraic group but not necessarily simply connected.
For any $\delta \,\in\, \pi_1(\op{G})$, there is a natural isomorphism
$$\pi_1(\mathcal{M}_{G}^{\delta})\,\cong\, \pi_0(\op{L_XG})\,\cong\, H^1(X,\,\pi_{1}({\op{G}})).$$
\end{theorem}

\begin{proof}
Consider the quotient map $\pi$ in Proposition \ref{twisteduniformization}.
By the Proposition \ref{prop:topological}, this induces a map between the underlying topological stacks. 
Since this fibration is locally trivial with respect to
the \'etale topology, we have a long exact sequence of homotopy groups
\begin{equation}\label{e2}
\pi_1(\mathcal{Q}_{\widetilde{\op{G}}})\,\longrightarrow\,
\pi_1(\mathcal{M}_{\op{G}}^{\delta})\,
\stackrel{\eta}{\longrightarrow}\, \pi_0(\zeta^{-1} \op{L_XG} \zeta) 
\,\longrightarrow\, \pi_0(\mathcal{Q}_{\widetilde{\op{G}}} )\,\longrightarrow\, 0\, .
\end{equation}
associated to the Serre-fibration $\pi$ (see Theorem 5.2 in \cite{No3}). Now, from Lemma \ref{simplyconnected} it
follows that the homomorphism $\eta$ in \eqref{e2} is injective, and from
Proposition \ref{uniformization} we conclude that $\eta$ is surjective. Consequently, the homomorphism $\eta$
is an isomorphism.

Since $\op{L_XG}$ and $\zeta^{-1} \op{L_XG}\zeta$ are conjugate
(by $\zeta$), it follows that the two sets $\pi_0(\zeta^{-1} \op{L_XG} 
\zeta)$ and $\pi_0(\op{L_XG})$ are bijective. Now the theorem
follows from Proposition \ref{uniformization}.
\end{proof}

A consequence of Theorem \ref{fundamentalgroup} is the following corollary on the fundamental group of the moduli stacks
of principal bundles with a reductive group as a structure group.

\begin{corollary}\label{fundamentalreductivestack}
Let $\op{G}$ be a reductive complex affine algebraic group, and let $\mathcal{M}^{\delta}_{\op{G}}$ denote a component 
of the moduli stack of principal $\op{G}$--bundles on the smooth complex projective curve $X$,
where $\delta\, \in\, \pi_1(\op{G})$. Then the fundamental group $\pi_1(\mathcal{M}^{\delta}_{\op{G}})$ is a subgroup of the (abelian) group $H^1(X,\,\pi_1(\op{G/Z(G)}))\times 
H^1(X,\,\pi_1(\op{\op{G}/[\op{G},\,\op{G}]}))$ such that the quotient group $$\left(H^1(X,\,\pi_1(\op{G/Z(G)}))\times
H^1(X,\,\pi_1(\op{\op{G}/[\op{G},\,\op{G}]}))\right)/\pi_1(\mathcal{M}^{\delta}_{\op{G}})$$
is $H^1(X,\, \op{Z([G,\,G])})$, where $\op{Z([G,\,G])}$ is the center of 
$\op{[G,\,G]}$.
\end{corollary}

\begin{proof}
Let $\op{Z(G)}$ denote the center of $\op{G}$, and let $\op{[G,\,G]}$ be the commutator subgroup of $G$. Consider
the natural group homomorphism
$$f\,:\,\op{G}\,\longrightarrow\, (\op{G}/\op{Z(G)})\times (\op{G}/[\op{G},\,\op{G}])\, .$$ It is easy to see that the kernel
$\op{K}$ of $f$ is $\op{Z(G)}\bigcap [\op{G},\,\op{G}]$ which also coincides with the center $\op{Z([G,\,G])}$. Now,
since $\op{[G,\,G]}$ is semisimple, we conclude that $\op{K}\,=\, \text{kernel}(f)$ is a finite group. The corresponding morphism
of moduli stacks 
$$\mathcal{M}_f\,: \,\mathcal{M}_{\op{G}}^{\delta}\,\longrightarrow \,\mathcal{M}^{\delta_1}_{\op{G/Z(G)}}\times
\mathcal{M}^{\delta_2}_{\op{G}/\op{[G,\,G]}}$$ is an \'etale Galois cover with Galois group $H^1(X,\,\op{K})$; 
here $\mathcal{M}_{\rm G}^{\delta}$ denotes a particular component of the moduli stack
$\mathcal{M}_{\op{G}}$, 
while $\delta_1$ and $\delta_2$ are the images of $\delta$ in $\pi_1(\op{G/Z(G)})$ and
$\pi_1(\op{G}/\op{[G,\,G]})$ respectively under the quotient maps. 
Hence from the long exact sequence of homotopy groups associated to the above fibration $\mathcal{M}_f$ we see that 
$\pi_1(\mathcal{M}_{\op{G}}^{\delta})$ injects into $\pi_1( \mathcal{M}^{\delta_1}_{\op{G/Z(G)}})\times
\pi_{1}(\mathcal{M}^{\delta_1}_{\op{G}/[\op{G},\,\op{G}]})$ with quotient $H^1(X,\, \op{Z([G,\,G])})$.

Since $\op{G/Z(G)}$ is semisimple, Theorem \ref{fundamentalgroup} says that $$\pi_1( \mathcal{M}^{\delta_1}_{\op{G/Z(G)}})\,=\,
H^1(X,\,\pi_1(\op{G/Z(G)}))\,.$$ On the other hand, since ${\op{G}/[\op{G},\,\op{G}]}$ is a product of copies of
the multiplicative group ${\mathbb G}_m$, it follows
that $$\pi_{1}(\mathcal{M}^{\delta_1}_{\op{G}/\op[\op{G},\,\op{G}]})\, =\, H^1(X,\,\pi_1(\op{\op{G}/[\op{G},\,\op{G}]}))\, .$$
This completes the proof.
\end{proof}
The following consequence of Corollary \ref{fundamentalreductivestack} was observed by an anonymous referee and we thank him for his comment. 

\begin{corollary}
The rank (as an abelian group) of $\pi_1(\mathcal{M}_{\op{G}}^{\delta})$ is $2gd$, where $d
\,=\,\dim \op{Z({G})}$. In particular the fundamental groups of the moduli space $\op{M^{\delta}_G}$ (see Theorem \ref{thm0})
and that of the moduli stack $\mathcal{M}_{\op{G}}^{\delta}$ differ only on their torsion parts. 
\end{corollary}

\begin{proof}
The result follows from the following short exact sequence obtained from Corollary \ref{fundamentalreductivestack},
the additivity of rank in such sequences and the vanishing of the ranks of $H^1(X;\, \pi_1(\op{Ad(G)}))$ and
$H^1(X;\,\op{Z([G,\,G])})$. We have
$$0\,\longrightarrow\, \pi_1(\mathcal{M}_G^{\delta})\,\longrightarrow\, H^1(X;\, \pi_1(\op{Ad(G)}))\oplus
H^1(X;\, \pi_1(\mathbb{G}_m^d))\,\longrightarrow\, H^1(X;\,\op{Z([G,\,G])})\,\longrightarrow\, 0.$$ Here
$\op{Ad(\op{G})\,=\,\op{G}/Z(\op{G})}$ denotes the adjoint group of $\op{G}$. 
\end{proof}

\section{Fundamental groups}

In this section, we compute fundamental group of some twisted moduli stacks. We consider moduli stacks of certain 
reductive group $\op{C_A\widetilde{G}}$ associated to a central subgroup $\op{A}$ of $\op{\widetilde{G}}$. The idea 
to consider moduli stacks for these groups $\op{C_A\widetilde{G}}$ comes from the work of Beauville-Laszlo-Sorger 
\cite{BLS:94}. \subsection{Fundamental group of the twisted moduli stack}\label{sec:fundtwistedstack}

As before, 
let $\widetilde{\op{G}}$ be a semi-simple and simply connected affine complex
algebraic group. Given a subgroup $A$ of the center of $\widetilde{\op{G}}$, define
$$\op{G}\,:=\,\widetilde{\op{G}}/A\, .$$
Take any $\delta \,\in\, \pi_1(\op{G})$. We shall now recall from
\cite{BLS:94} the construction of a ``twisted" moduli stack 
$\mathcal{M}_{\widetilde{\op{G}}}^{\delta}$ dominating
$\mathcal{M}_{\op{G}}^{\delta}$.

For any positive integer $n$, the group of $n$-th roots of unity will be denoted
by $\mu_n$. We identify $\mu_n$ with $\mathbb{Z}/n\mathbb{Z}$ using the generator
$\exp(2\pi \sqrt{-1}/n)$ of $\mu_n$.
Fix an isomorphism
\begin{equation}\label{fe1}
A\,\simeq\, \prod_{j=1}^s\mu_{n_j}\, .
\end{equation}
Since $\prod_{j=1}^s\mu_{n_j}$ is canonically a subgroup of $T\,:=\,(\mathbb{G}_{m})^s$,
the isomorphism in \eqref{fe1} identifies $A$ with a subgroup of $T$. Next we
identify the quotient $\mathbb{G}_{m}/\mu_n$ with $\mathbb{G}_{m}$ via the endomorphism
$z\, \longmapsto\, z^n$ of $\mathbb{G}_{m}$. Using these, the quotient $T/A$ gets
identified with $T$. Let
\begin{equation}\label{f1}
C_{A}(\widetilde{\op{G}}) \,=\, (\widetilde{\op{G}}\times T)/A
\end{equation}
be the quotient by the diagonal subgroup $A$. The projection to the second factor
$$
C_{A}(\widetilde{\op{G}})\, \longrightarrow\, (\widetilde{\op{G}}/A)\times (T/A)
\,=\, \op{G}\times (T/A) \, \longrightarrow\, T/A\,=\, T
$$
induces a morphism of the moduli stacks
\begin{equation}\label{det}
\det \,:\, \mathcal{M}_{C_A(\widetilde{\op{G}})}\, \longrightarrow\, \mathcal{M}_{T}\, .
\end{equation}

Now, since $\widetilde{\op{G}}$ is simply connected, there is an isomorphism $$\rho\,:\, 
\pi_1({\op{G}})\,\cong\, A\, .$$ Take any $\vec{d}\,=\,(d_1,\, \dots,\, d_s)
\, \in\, {\mathbb Z}^s$ (see \eqref{fe1}) such that $0\,\leq\, d_i\,<\, n_i$ for all
$1\,\leq\, i\,\leq\, s$. We set
\begin{equation}\label{fe2}
\delta\, :=\, \rho^{-1}(\exp({2\pi} \sqrt{-1}(-{d_1}/{n_1})),\,\dots,\, \exp({2\pi} \sqrt{-1}(-{d_s}/{n_s})))\,. 
\end{equation} 
Let
\begin{equation}\label{stk-MG-delta}
\mathcal{M}_{\widetilde{\op{G}},A}^{\delta}\, :=\,
{\det}^{-1} ((\mathcal{O}_X(d_1p),\,\dots,\, \mathcal{O}_X(d_sp)))\,\subset\,
\mathcal{M}_{C_A{\widetilde{\op{G}}}}\, 
\end{equation}
be the sub-stack, where $\delta$ and 
$\vec{d}\,=\,(d_1,\,\dots,\,d_s)$ are related by \eqref{fe2}. Following 
\cite[Section 2]{BLS:94}, we shall call the stack $\mathcal{M}_{\widetilde{\op{G}},A}^{\delta}$ the 
{\em twisted moduli stack parametrizing $C_{A}(\widetilde{\op{G}})$--bundles with ``determinant" 
$(\mathcal{O}_X(d_1p),\,\dots,\, \mathcal{O}_X(d_sp))$}. 

It should be mentioned that the twisted principal $(C_{A}\widetilde{\op{G}})$--bundles, described above, 
can be realized as parahoric $\op{G}$--torsors on $X$ \cite{BS}, \cite{He}.
So $\mathcal{M}_{\widetilde{\op{G}},A}^{\delta}$ is also a moduli stack of parahoric $\op{G}$--torsors.

\begin{remark}
In \cite{BLS:94}, $\mathcal{M}_{\widetilde{\op{G}},A}^{\delta}$ is defined for arbitrary semi-simple groups (not 
necessarily simply connected) and is denoted by $\mathcal{M}_{\widetilde{\op{G}},A}^{d}$. The notation 
$\mathcal{M}_{\widetilde{\op{G}},A}^{\delta}$ is used in \cite{BLS:94} for an open and closed substack of 
$\mathcal{M}_{\widetilde{\op{G}},A}^{\delta}$. It was observed \cite{BLS:94} that for simply connected groups these 
two substacks coincide.
\end{remark}

The natural projection
$C_{A}(\widetilde{\op{G}})\, \longrightarrow\, \widetilde{\op{G}}/A \,=\, \op{G}$ induces a
surjective morphism of stacks
$$
\mathcal{M}_{\widetilde{\op{G}},A}^{\delta}\,\longrightarrow
\, \mathcal{M}_{\op{G}}^{\delta}\, .
$$
We now recall from \cite[Proposition 1.3 and Example 2.4]{BLS:94}, \cite{BS} and \cite{He} the uniformization theorem for twisted moduli stacks. 

\begin{proposition}\label{moretwisteduniformization}
Let $A$ denote a subgroup of the center $\op{Z}(\widetilde{\op{G}})$ of $\widetilde{\op{G}}$, and consider the group $\op{G}\,=\,\widetilde{\op{G}}/A$. Let $\zeta$
be any element of $\op{LG}^{\delta}(\mathbb{C})$. Then there is a canonical isomorphism 
$$\mathcal{M}_{\widetilde{\op{G}},A}^{\delta}\,\simeq\,
(\zeta^{-1}(\op{L_X}\widetilde{\op{G}})\zeta) 
\backslash \mathcal{Q}_{\widetilde{\op{G}}}\, ,$$
and moreover the natural fibration
$\pi\,:\, \mathcal{Q}_{\widetilde{\op{G}}} \,\longrightarrow\,
\mathcal{M}_{\widetilde{\op{G}},A}^{\delta}$ is locally trivial in the \'etale topology.
\end{proposition}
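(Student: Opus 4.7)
The plan is to deduce this statement by applying the general uniformization (Proposition \ref{uniformization}) to the enlarged group $C_A\widetilde{\op{G}}$ and then restricting to the fibre of the determinant morphism \eqref{det}. First, Proposition \ref{uniformization}(1) applied to $C_A\widetilde{\op{G}}$ gives
$$\mathcal{M}_{C_A\widetilde{\op{G}}}\,\simeq\,\op{L_X}(C_A\widetilde{\op{G}})\,\backslash\,\mathcal{Q}_{C_A\widetilde{\op{G}}},$$
with quotient morphism \'etale locally trivial. Since $\mathcal{M}_{\widetilde{\op{G}}}^{\delta}$ is by construction $\det^{-1}\bigl((\mathcal{O}_X(d_ip))_i\bigr)\subset\mathcal{M}_{C_A\widetilde{\op{G}}}$, the task reduces to describing this preimage via the uniformization; the \'etale local triviality of $\pi$ will follow by base change from the corresponding property for $C_A\widetilde{\op{G}}$.

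Next, the exact sequence $1\to\widetilde{\op{G}}\to C_A\widetilde{\op{G}}\to T\to 1$ obtained from the projection $(g,t)\mapsto\bar t$ passes to loop groups, inducing a surjection $\mathcal{Q}_{C_A\widetilde{\op{G}}}\to\mathcal{Q}_T$ whose fibres are torsors over $\mathcal{Q}_{\widetilde{\op{G}}}$. I would lift the given $\zeta\in\op{LG}^{\delta}(\mathbb{C})$ to an element $\tilde\zeta\in\op{L}(C_A\widetilde{\op{G}})(\mathbb{C})$ whose image in $\op{LT}$ represents the $T$-bundle $(\mathcal{O}_X(d_ip))_i$ via the point $(t^{d_i})_i\in\mathcal{Q}_T$; the existence of such a lift is precisely the content of the relation \eqref{fe2} connecting $\delta$ and $\vec d$. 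The fibre of $\mathcal{Q}_{C_A\widetilde{\op{G}}}\to\mathcal{Q}_T$ over $(t^{d_i})_i$ is then the translate $\tilde\zeta\cdot\mathcal{Q}_{\widetilde{\op{G}}}$, identified with $\mathcal{Q}_{\widetilde{\op{G}}}$ through left multiplication by $\tilde\zeta^{-1}$.

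The elements of $\op{L_X}(C_A\widetilde{\op{G}})$ preserving this fibre are those whose image in $\op{L_X}T$ lies in the stabiliser $\op{L}^{+}T$ of $(t^{d_i})_i$; since $\op{L_X}T\cap\op{L}^{+}T=T(\mathbb{C})$ (constants), these are elements mapping to a constant in $T(\mathbb{C})$. Constant $T$-valued loops sit inside $\op{L}^{+}(C_A\widetilde{\op{G}})$ and, $T$ being central in $C_A\widetilde{\op{G}}$, act trivially on $\mathcal{Q}_{C_A\widetilde{\op{G}}}$; hence the effective stabiliser is $\op{L_X}\widetilde{\op{G}}$, namely the kernel of $\op{L_X}(C_A\widetilde{\op{G}})\to\op{L_X}T$. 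Transporting this action from $\tilde\zeta\cdot\mathcal{Q}_{\widetilde{\op{G}}}$ back to $\mathcal{Q}_{\widetilde{\op{G}}}$ via $\tilde\zeta^{-1}$ turns left multiplication by $h\in\op{L_X}\widetilde{\op{G}}$ into left multiplication by $\tilde\zeta^{-1}h\tilde\zeta$, yielding the claimed subgroup $\zeta^{-1}(\op{L_X}\widetilde{\op{G}})\zeta$ acting on $\mathcal{Q}_{\widetilde{\op{G}}}$.

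The chief obstacle is the bookkeeping: constructing $\tilde\zeta$ compatibly with \eqref{fe2}, and correctly accounting for the central $T(\mathbb{C})$ contribution to the stabiliser so that it drops out and leaves exactly $\op{L_X}\widetilde{\op{G}}$. Everything else is a formal consequence of Proposition \ref{uniformization} applied to $C_A\widetilde{\op{G}}$ together with the identification of the fibre of the determinant.
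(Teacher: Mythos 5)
The paper itself gives no proof of this proposition --- it is quoted from \cite[p.~189]{BLS:94} (with the local triviality traced to \cite{DS}) --- so your proposal has to be measured against the argument there, whose geometric skeleton you do reproduce: realize the fibre of $\det$ over $(\mathcal{O}_X(d_1p),\cdots,\mathcal{O}_X(d_sp))$ as a translate $\widetilde{\zeta}\cdot\mathcal{Q}_{\widetilde{\op{G}}}$ inside the Grassmannian of $C_A\widetilde{\op{G}}$, and check that the stabilizer acts through $\zeta^{-1}(\op{L_X}\widetilde{\op{G}})\zeta$. However, the step you use to make this a proof is false: Proposition \ref{uniformization} cannot be applied to $C_A\widetilde{\op{G}}$, since that group is reductive with a positive-dimensional central torus, not semisimple, and uniformization genuinely fails for it. A $C_A\widetilde{\op{G}}$--bundle in the image of $\mathcal{Q}_{C_A\widetilde{\op{G}}}\to\mathcal{M}_{C_A\widetilde{\op{G}}}$ is trivial over $X\setminus\{p\}$, so each of its determinant line bundles is of the form $\mathcal{O}_X(np)$; since $\op{Pic}(X\setminus\{p\})\cong\op{Pic}^0(X)\neq 0$ for $g\geq 1$, most $C_A\widetilde{\op{G}}$--bundles are not of this shape, the map is not surjective, and $\mathcal{M}_{C_A\widetilde{\op{G}}}$ is \emph{not} isomorphic to $\op{L_X}(C_A\widetilde{\op{G}})\backslash\mathcal{Q}_{C_A\widetilde{\op{G}}}$. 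Consequently neither the surjectivity of $\pi$ onto $\mathcal{M}^{\delta}_{\widetilde{\op{G}}}$ nor its \'etale local triviality can be obtained ``by base change'' from a uniformization of $\mathcal{M}_{C_A\widetilde{\op{G}}}$, and these two points are exactly the substance of the proposition.

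What has to be proved directly is that any (family of) $C_A\widetilde{\op{G}}$--bundle(s) with determinant $(\mathcal{O}_X(d_ip))_i$ is, \'etale--locally on the base, trivializable over $X\setminus\{p\}$; this is where the fixed determinant saves the day even though uniformization fails globally. The standard route, which is the one underlying \cite{BLS:94} and \cite{DS}, is: the induced $\op{G}$--bundle is trivial over $X\setminus\{p\}$ by Drinfeld--Simpson/Harder, the determinant is canonically trivial there because $\mathcal{O}_X(d_ip)|_{X\setminus\{p\}}$ is trivial, and the obstruction to lifting the resulting $\op{G}\times T$--trivialization to the $C_A\widetilde{\op{G}}$--bundle lies in $H^1(X\setminus\{p\},A)$, which can be removed by modifying the chosen trivialization because the coboundary $\op{G}(X\setminus\{p\})\to H^1(X\setminus\{p\},A)$ is surjective (as $H^1(X\setminus\{p\},\widetilde{\op{G}})$ is trivial for the simply connected group $\widetilde{\op{G}}$ on an affine curve). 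Your remark that the lift $\widetilde{\zeta}$ exists precisely because of \eqref{fe2} is correct, and the stabilizer computation (elements mapping to constants in $T(\mathbb{C})$, which are central and act trivially on the Grassmannian, leaving exactly $\zeta^{-1}(\op{L_X}\widetilde{\op{G}})\zeta$ after transport by $\widetilde{\zeta}$) is sound; but without the trivialization statement above your argument establishes neither the claimed isomorphism nor the \'etale local triviality of $\pi$.
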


\begin{remark}
In the statement of Proposition \ref{moretwisteduniformization}, note that $\zeta$ is an element of 
$\op{LG}^{\delta}$. We explain the notation of conjugation by $\zeta$ in $\op{L}_X\widetilde{\op{G}}$. 
Consider the short exact sequence 
$$0\,\longrightarrow\, T/A\,\longrightarrow\, C_A(\widetilde{\op{G}})\,\longrightarrow\, 
\widetilde{\op{G}}/A\, \longrightarrow\, 0\, ,$$ 
where $\op{G}\,=\,\widetilde{\op{G}}/A$. Moreover $T/A$ is in the center of $C_A(\widetilde{\op{G}})$. 
Any two lifts of $\zeta$ to $\op{L}C_A(\widetilde{\op{G}})$ will differ by a central element. 
Consequently, conjugation in $\op{L}C_A({\widetilde{\op G}})$ by any lift of $\zeta$ is independent of the lift.
\end{remark}

\begin{proof}[Proof of Proposition \ref{moretwisteduniformization}]
We just sketch the main step to reduce to the untwisted case. First observe that $\widetilde{\op{G}}$ is the kernel 
of the natural homomorphism $C_{A}\widetilde{\op{G}}\,\longrightarrow\, T$. Now by construction, 
$$\mathcal{M}_{\widetilde{\op{G}}, A}^{\delta}\, =\, {\det}^{-1} ((\mathcal{O}_X(d_1p),\,\dots,\, 
\mathcal{O}_X(d_sp)))\, ,$$ where $\delta$ and $(d_1,\dots,d_s)$ are related by \eqref{fe2}. Observe that 
$(\mathcal{O}_X(d_1p),\,\dots,\, \mathcal{O}_X(d_sp))$ restricted to $X\backslash \{p\}$ is just 
$(\mathcal{O}_{X\backslash\{p\}},\,\dots,\, \mathcal{O}_{X\backslash\{p\}})$. Thus any principal 
$C_A(\widetilde{\op{G}})$--bundle with determinant $(\mathcal{O}_X(d_1p),\,\dots,\, \mathcal{O}_X(d_sp))$ 
restricted to the punctured curve $X\backslash \{p\}$, is a principal $\widetilde{\op{G}}$--bundle on $X\backslash 
\{p\}$. This construction is clearly functorial, in the sense that if a scheme $S$ parametrizes a family of 
$C_{A}(\widetilde{\op{G}})$--bundles on $X$ with determinant $(\mathcal{O}_X(d_1p),\,\dots,\, 
\mathcal{O}_X(d_sp))$, then the restriction of the family to $(X\backslash \{p\}) \times S$ gives a family of 
principal $\widetilde{\op{G}}$--bundles on $X\backslash\{p\}$ parametrized by $S$.

Now the proof follows as in the untwisted case by using \cite{DS} and the proof of Proposition 1.3 in \cite{BLS:94} 
(see also Remark 3.6 in \cite{BL}), but we outline the key steps for completeness. First consider the natural
homomorphism $C_{A}(\widetilde{\op{G}})\,\longrightarrow\, \op{T}$ 
which in turn gives a homomorphism of the corresponding loop groups $\op{det}\,:\,\op{L}{C_{A}(\widetilde{\op{G}})}\,
\longrightarrow\, \op{L}\op{T}$. Let 
us consider the ind-subscheme of $\op{L}{C_{A}(\widetilde{\op{G}})}$ given by 
$\op{L}{\widetilde{\op{G}}}^{\delta}\,:=\,\op{det}^{-1}(z^{-d_1},\dots,z^{-d_s})$. 
The discussion in the above paragraph and the uniformization theorem \cite{DS} together give the following isomorphism 
of stacks: $$\mathcal{M}_{\widetilde{\op{G}},A}^{\delta}\,\simeq \, 
\op{L}_X\widetilde{\op{G}}\backslash \op{L}{\widetilde{\op{G}}}^{\delta}/ \op{L}^{+}{\widetilde{\op{G}}}\, .$$ 
Let $\zeta$ be any element of $\op{L}\op{G}^{\delta}(\mathbb{C})$. Take any lift $\widetilde{\zeta}$ of $\zeta$ in 
$\op{L}\widetilde{\op{G}}^{\delta}$. Observe that multiplication by $\widetilde{\zeta}^{-1}$ gives an isomorphism of 
$\op{L}\widetilde{\op{G}}^{\delta}$ with $\op{L}\widetilde{\op{G}}$. Hence the result on uniformization follows. 
Local triviality follows directly from \cite{DS}.
\end{proof}

Recall that $\widetilde{\op{G}}$ is simply connected, and $\op{G}\, =\, \widetilde{\op{G}}/A$, where $A$ is a subgroup 
of the center of $\widetilde{\op{G}}$ isomorphic to $\pi_1(\op{G})$. Now as in Section \ref{fundamentalstack}, we apply 
the homotopy exact sequence to the above Serre-fibration $\pi$, to get the following:

\begin{corollary}\label{cor1}
For any $\delta \,\in\, \pi_1(\op{G})$, the above moduli stack 
$\mathcal{M}_{\widetilde{\op{G}},A}^{\delta}$ is simply connected.
\end{corollary}

\begin{proof}
Since $\widetilde{\op{G}}$ is simply connected, it follows from Proposition 
\ref{uniformization} that $\pi_0(\zeta^{-1}(\op{L_X}\widetilde{\op{G}})\zeta)$ is 
trivial. Hence the above mentioned homotopy exact sequence gives that
$$\pi_1(\mathcal{M}_{\widetilde{\op{G}},A}^{\delta})\,\simeq\, 
\pi_0(\zeta^{-1}(\op{L_X}\widetilde{\op{G}})\zeta)\,=\, \{1\}\, .$$ This completes the 
proof.
\end{proof}

\subsection{Notation}\label{stacknotation}

Let $\op{G}$ be a connected semisimple complex affine algebraic group, and let $\widetilde{\op{G}}$ be its 
universal cover. For a central subgroup $A$ of $\widetilde{\op{G}}$ isomorphic to $\pi_{1}(\op{G})$, henceforth we 
drop the subscript $A$ and denote by $\mathcal{M}_{\widetilde{\op{G}}}^{\delta}$ the twisted moduli stack 
$\mathcal{M}_{\widetilde{\op{G}},\, A}^{\delta}$.

\subsection{Fundamental group of the regularly stable locus}\label{se3.1}

Henceforth, we assume that $\text{genus}(X)\,=\, g\, \geq\, 2$.
Take an element $\delta$ of the center of a
simple and simply connected group $\widetilde{\op{G}}$.
If $g\, =\, 2$, then in this section, we assume that either $\op{G}\, \not=\, \text{SL}(2,{\mathbb C})$ or
$\delta\, \not=\, 1$.

We shall recall the definition of a regularly stable principal bundle \cite{BLS:94}, \cite{BH}; for this
we need the definition of a stable principal bundle which we also recall below \cite{Ra}.

\begin{definition} Let $H$ be a connected reductive affine algebraic group over $\mathbb{C}$. 
A principal $H$--bundle $E_H$ on $X$ is said to be {\it semistable} (respectively, {\it stable}) 
if for any given reduction $E_P \,\subset\, E_H$ of the structure group of $E_H$ to any proper 
parabolic subgroup $P \,\subsetneq\, H$ (not necessarily maximal), and any nontrivial dominant 
character $\chi \,:\, P \,\longrightarrow\, \mathbb{G}_m$ which is trivial on the center of $H$,
we have ${\rm degree}(\chi_*E_P) \,\leq\, 0$ (respectively, ${\rm degree}(\chi_*E_P) \,<\, 0$),
where $\chi_*E_P \,=\, E_P \times^{\chi} \mathbb{G}_a$ is the line bundle on $X$ associated to 
the principal $P$--bundle $E_P$ for the character $\chi$.
\end{definition}

It is known that a principal $H$--bundle $E_H$ is semistable (respectively, stable) if and only if for any maximal 
parabolic subgroup $P \,\subsetneq\, H$, and any section $s$ of the projection $E_H/P\, \longrightarrow\, X$, we 
have $\text{degree}(s^*T_{\rm rel}) \, \geq\, 0$ (respectively, $\text{degree}(s^*T_{\rm rel}) \, >\, 0$), where 
$T_{\rm rel}$ is the relative tangent bundle for the above projection $E_H/P\, \longrightarrow\, X$ \cite[Lemma 
2.1]{Ra}.

A principal
$\op{H}$--bundle $E$ on $X$ is called \textit{regularly stable} if
\begin{itemize}
\item $E$ is stable, and

\item the natural homomorphism from the center of $\op{H}$ to $\text{Aut}(E)$, given by the action 
of $\op{H}$ on $E$, is an isomorphism.
\end{itemize}

As before, $\mathcal{M}^{\delta}_{\widetilde{\op{G}}}$ denotes the twisted
moduli stack associated to the triple $(X,\, \op{G},\, \delta)$ (see Section \ref{stacknotation}). Let
$$
\mathcal{M}^{\delta,rs}_{\widetilde{\op{G}}}\, \subset\,
\mathcal{M}^{\delta}_{\widetilde{\op{G}}}
$$
be the open sub-stack defined by the regularly stable locus. Then there are the following natural inclusions
\begin{equation}
\label{eqn:inclusionofstacks}
\mathcal{M}^{\delta,rs}_{\widetilde{\op{G}}}\, \subset\,
\mathcal{M}^{\delta,s}_{\widetilde{\op{G}}}\,\subseteq\, \mathcal{M}^{\delta,ss}_{\widetilde{\op{G}}}\, \subset\,
\mathcal{M}^{\delta}_{\widetilde{\op{G}}}\, ,
\end{equation}
where the $\mathcal{M}^{\delta,s}_{\widetilde{\op{G}}}$ (respectively,
$\mathcal{M}^{\delta,ss}_{\widetilde{\op{G}}}$) denotes the open sub-stack of
$\mathcal{M}_{\widetilde{\op{G}}}^{\delta}$ given by the stable (respectively, semistable) locus.

\subsection{Presentation as quotient stacks}\label{sec:pres-quot-stack}

In this section, we recall following \cite{Ram1}, a presentation of 
$\mathcal{M}^{\delta,ss}_{\widetilde{\op{G}}}$ as a quotient stack. We closely follow and recall the 
constructions given in the proof of Lemma 7.3 in \cite{BLS:94} (see also Proposition 3.4 in \cite{KN}). 
As mentioned before, it is assumed that $g(X)\,\geq\, 2$.

Let $\op{G}$ be any semisimple group. Choose a faithful representation $\rho \,:\, \op{G} 
\,\longrightarrow \, \op{SL}_r$. For any principal $\op{G}$-bundle $P$, let 
$\rho_*(P) := P_{\rho}\,=\, P\times^\rho {\mathbb C}^r$ be the vector bundle associated 
to $P$ for the representation $\rho$.

Fix a closed point $p$ on the curve $X$. For any integer $n$ sufficiently large, we have
$H^1(X,\,P_{\rho}(np))\,=\,0$ for all all semistable principal $\op{G}$-bundles $P$. Indeed, this
follows from semicontinuity of cohomology and boundedness of semistable principal $\op{G}$-bundles.
Take $n$ to be sufficiently large. Set $m(n)\,=\,r(n+1-g)$, and consider the functor 
parametrizing locally free quotients $E$ of $\mathcal{O}_X^{\oplus m(n)}$ of rank $r$ and degree 
$rn$. This is clearly representable \cite{Ram1} by a scheme $R(n)$ along with a universal family 
$\mathcal{E}$. Moreover $R(n)$ is smooth for for all $n$ sufficiently large. By \cite[Sections 4.8. 
4.13.3]{Ram1}, we get a scheme $R_{\op{G}}(n)$ that represents the functor of global sections 
of the fiber bundle $\mathcal{E}/\op{G}$ on $X\times R(n)$ which is equivalent to the functor 
parametrizing principal $\op{G}$--bundles $P$ whose associated vector bundle $P_{\rho}(np)$ is a locally 
free quotient of $\mathcal{O}_X^{\oplus m(n)}$. By the discussion in the proof of Lemma 4.13.3
in \cite{Ram1}
we get that $R_{\op{G}}(n)$ is smooth for $n$ large enough and supports an universal 
family of principal $\op{G}$-bundles. Moreover the group $\Gamma_n\,=\,\op{GL}(m(n))$ acts on 
$R_{\op{G}}(n)$ and $R(n)$ and the morphism $R_{\op{G}}(n)\,\longrightarrow\, R(n)$ is $\Gamma_n$ 
equivariant.

Now assume as before that $\widetilde{\op{G}}$ is simply connected and $A\,=\, \prod_{j=1}^s \mu_{n_j}\subset T$ is a central subgroup of 
$\widetilde{\op{G}}$ such that $\widetilde{\op{G}}/A\,=\,\op{G}$. The group $C_A\widetilde{\op{G}}=(\widetilde{\op{G}}\times T)/A$ is reductive; we first embed 
$C_{A}\widetilde{\op{G}}$ into a reductive group $S\,=\,\prod_{i=1}^s \op{GL}_{N_i}\times T/A$ such that the center of 
$C_{A}\widetilde{\op{G}}$ goes to the center of $S$ (see the proof of Lemma 7.3 in \cite{BLS:94} for the construction of $S$). Now as before we have a map $\det:\mathcal{M}_{S}\rightarrow \mathcal{M}_{Z(S)}$, where $Z(S)$ is the center of $S$. For any element $d'=(d'_1,\dots, d'_{2s})\in \mathbb{Z}^{2s}$ and a closed point $p$, consider the element $(\mathcal{O}_X(d'_1p),\dots, \mathcal{O}_X(d'_{2s}p))$ of $\mathcal{M}_{Z(S)}$. We denote by $\mathcal{M}_S^{d'}$ the closed sub-stack $\det^{-1}(\mathcal{O}_X(d'_1p),\dots, \mathcal{O}_X(d'_{2s}p))$.
 In particular, we have the diagram 
\begin{equation}\label{eqn:arrows}
\xymatrix{
\mathcal{M}_{C_A{\widetilde{\op{G}}}}\ar[r] &\mathcal{M}_S\\
\mathcal{M}_{\widetilde{\op{G}}}^{\delta}\ar[u]\ar[r]&\mathcal{M}_{S}^{d'}\ar[u].
&}
\end{equation}
Here $\delta$ and $d'$ are related by the map between the centers of $C_A{\widetilde{\op{G}}}$ and $S$ and equation \eqref{fe2}.
Since Ramanathan's construction 
works for arbitrary reductive group, the above construction goes through with the role of ${\rm SL}_r$ being replaced by $S$. Thus 
we get a scheme $R_{C_A(\widetilde{\op{G}})}(n)$ along with a universal family of principal $C_A(\widetilde{\op{G}})$-bundles. The 
projection $C_A\operatorname{\widetilde{\op{G}}}\,\longrightarrow\, T/A\,\simeq\, T$ induces a map $$\det: 
R_{C_A\widetilde{\op{G}}}(n)\,\longrightarrow \,\mathcal{M}_T\, ,$$ where $T\,:=\,\mathbb{G}_m^{s}$. Fixing $\vec{d}$ and $\delta$
related by 
equation \eqref{fe2}, we define the scheme 
$R^{\delta}_{\widetilde{\op{G}}}(n)\,=\,\det^{-1}(\mathcal{O}_X(d_1p),\,\cdots,\,\mathcal{O}_X(d_sp))$. Similarly we also define the scheme $R^{d'}_{S}$. Since
$R_{C_A\widetilde{\op{G}}}(n)$ is smooth for $n$ large enough, and the morphism $\det$ is smooth,
this implies that $R_{\widetilde{\op{G}}}^{\delta}(n)$ is also
smooth.

For a faithful representation $\rho \,:\, C_A\widetilde{\op{G}} \,\longrightarrow\, S$, consider the stack 
$\mathcal{M}^{\delta}_{\widetilde{\op{G}}}(n)$ parametrizing $\delta$-twisted
$\widetilde{\op{G}}$-bundles $E$ such that the corresponding vector 
bundles $E_{\rho}(np)$ are generated by global sections and $H^1(X,\, E_{\rho}(np))\,=\,0$. 
By the above discussion, we get the following:

\begin{proposition} The stacks $\mathcal{M}^{\delta}_{\widetilde{\op{G}}}(n)$ can be presented as the quotient stack
$[R_{\widetilde{\op{G}}}^{\delta}(n)/\Gamma_n]$, where $R^{\delta}_{\widetilde{\op{G}}}(n)$ is a smooth scheme and $\Gamma_n$ is a 
reductive group. Moreover $R^{\delta}_{\widetilde{\op{G}}}(n)$ supports a family $\mathcal{W}$ of $\delta$-twisted $\widetilde{\op{G}}$--bundles along with a lift of the action of $\Gamma_n$.
\end{proposition}

\subsection{Proof of Proposition \ref{prop:topological}}\label{sec:topological}

We now observe that these spaces $R^{\delta}_{\widetilde{\op{G}}}(n)$ can be used to give the morphism 
of the underlying topological stacks 
$\pi^{\rm top} \,:\, \mathcal{Q}_{\widetilde{\op{G}}}\,\longrightarrow\, \mathcal{M}_{\op{G}}^{\delta}$ 
induced by the morphism of stacks 
$\pi : \mathcal{Q}_{\widetilde{\op{G}}}\,\longrightarrow\, \mathcal{M}_{\op{G}}^{\delta}$ 
in Proposition \ref{twisteduniformization}. 
Since $\mathcal{M}_{\op{G}}^{\delta}$ is a quotient of $\mathcal{M}_{\widetilde{\op{G}}}^{\delta}$ by the finite group 
$H^1(X,\,\pi_1(\op{G}))$, it is enough to consider the simply connected case. For each $n$, we define 
$X_n \,\subset\,\mathcal{Q}_{\widetilde{\op{G}}}$ by 
$$X_n\,=\,\{ {\bf g}\mathcal{P} \in \mathcal{Q}_{\widetilde{\op{G}}}\ :\ 
H^1(X,\, \rho_*(\pi({\bf g}))\otimes \mathcal{O}(np))\,=\, 0\}\, ,
$$
where $\mathcal{P}\, =\, {\op L}^+\widetilde{\op{G}}$.
It follows that $X_n\subset X_{n+1}$. The affine Grassmannian $\mathcal{Q}_{\widetilde{\op{G}}}$ has the structure of an 
ind-variety, and hence $X_n$ acquires a natural topology. By the proof of Lemma 3.2 in \cite{KN}, we see that each 
$X_n$ is open in $\mathcal{Q}_{\widetilde{\op{G}}}$
and that $\bigcup_{n\geq 0}X_n\,=\,\mathcal{Q}_{\widetilde{\op{G}}}$.
By definition
$$
X_n\,=\,\pi^{-1}(\mathcal{M}_{\widetilde{\op{G}}}^{\delta}(n))\,, 
$$
and it parametrizes a family of $\delta$-twisted $\widetilde{\op{G}}$--bundles. By the universality of 
$R_{\widetilde{\op{G}}}^{\delta}(n)$ (see Section 7.8 in \cite{KNR} and Section 3 in \cite{KN}), we get a family 
$\mathcal{F}_n$ of $\Gamma_n$--bundles on $X_n$ and $\Gamma_n$--equivariant morphism $\mathcal{F}_n\,\longrightarrow\, 
R^{\delta}_{\widetilde{G}}(n)$. Taking quotients, we get a morphism of their underlying topological quotient stacks 
$X_n\,\longrightarrow\, \mathcal{M}_{\widetilde{G}}^{\delta}(n)$. Taking the limit, we get the required morphism of 
topological stacks 
$\pi^{\op{top}} : \mathcal{Q}_{\widetilde{\op{G}}} \longrightarrow \mathcal{M}_{\widetilde{\op{G}}}^\delta$. 
This completes the proof of Proposition \ref{prop:topological}.

\subsection{Fundamental group of $\mathcal{M}^{\delta,rs}_{\widetilde{\op{G}}}$}

Let $R^{\delta,ss}_{\widetilde{\op{G}}}(n)$ (respectively, $R^{\delta,rs}_{\widetilde{\op{G}}}(n)$) be an open subscheme of 
$R^{\delta}_{\widetilde{\op{G}}}(n)$ such that the associated family of $\delta$ twisted principal $\widetilde{\op{G}}$-bundles
is semistable 
(respectively, regularly stable). Since our representation $\rho$ takes the center of $C_A\widetilde{\op{G}}$ to the center of $S$, it 
follows from \cite[Theorem 3.18]{RR} (see also \cite{Ram1}), that the canonical map 
$R_{C_A\widetilde{\op{G}}}(n)\,\longrightarrow\, R_{S}(n)$ preserves semistability.

For $n$ large enough, $\mathcal{M}_{\widetilde{\op{G}}}^{\delta,ss}\,\hookrightarrow\, \mathcal{M}_{\widetilde{\op{G}}}^{\delta}(n)$ 
and we get that $\mathcal{M}_{\widetilde{\op{G}}}^{\delta,ss}$ coincides with the quotient stack $[R^{\delta,ss}_{\widetilde{\op{G}}}(n)/\Gamma_n]$.

\begin{lemma}
For n large enough, the codimension of the complement of $R^{\delta,ss}_{\widetilde{\op{G}}}(n)$ in 
$R^{\delta}_{\widetilde{\op{G}}}(n)$ is at least two. In particular 
$\pi_1(\mathcal{M}_{\widetilde{\op{G}}}^{\delta,ss})=\pi_1(\mathcal{M}_{\widetilde{\op{G}}}^{\delta}(n))$.
\end{lemma}

\begin{proof}By \cite[Lemma 2.1]{BH} (see also \cite[Theorem II.6]{F}), the codimension of 
$\mathcal{M}_{\widetilde{\op{G}}}^{\delta,ss}$ in $\mathcal{M}_{\widetilde{\op{G}}}^{\delta}(n)$ is at least two. 
Since $R^{\delta}_{\widetilde{\op{G}}}(n)$ and $R^{\delta,ss}_{\widetilde{\op{G}}}(n)$ are $\Gamma_n$ torsors, 
this implies that the codimension of the complement of $R^{\delta,ss}_{\widetilde{\op{G}}}(n)$ in 
$R^{\delta}_{\widetilde{\op{G}}}(n)$ is at least two. Now by construction $R^{\delta}_{\widetilde{\op{G}}}(n)$ 
is smooth and hence $\pi_1(R^{\delta,ss}_{\widetilde{\op{G}}}(n))=\pi_1(R^{\delta}_{\widetilde{\op{G}}}(n))$. 
Thus the result follows.
\end{proof}

\begin{lemma}\label{lem1}
The fundamental group $\pi_1(\mathcal{M}^{\delta,rs}_{\widetilde{\op{G}}})$ is trivial if 
\begin{itemize}
\item either $g(X)\,\geq \,3$, or

\item $g\, =\, 2$ with either $\widetilde{\op{G}}\,\neq \,\op{SL}_2$ or $\delta \,\neq \,1$.
\end{itemize}
\end{lemma}

\begin{proof}
By \cite[Theorem 2.5]{BH2}, we get that the codimension of the complement of 
$\mathcal{M}^{\delta,rs}_{\widetilde{\op{G}}}$ in 
$\mathcal{M}^{\delta,ss}_{\widetilde{\op{G}}}$ is at least $2$ if $g(X)\geq 3$ and 
$\widetilde{\op{G}}\neq \op{SL}_2$ or $\delta \neq 1$ if $g(X)=2$. Since 
$R^{\delta,rs}_{\widetilde{\op{G}}}(n)$ (respectively 
$R^{\delta,ss}_{\widetilde{\op{G}}}(n)$) are both $\Gamma_n$--torsors, it implies that for $n$ 
large enough the codimension of the complement of $R^{\delta,rs}_{\widetilde{\op{G}}}(n)$ in 
$R^{\delta,ss}_{\widetilde{\op{G}}}(n)$ is at least two. Moreover, both 
$R^{\delta,rs}_{\widetilde{\op{G}}}(n)$ and $R^{\delta,ss}_{\widetilde{\op{G}}}(n)$ are 
smooth. Thus 
$\pi_1(R^{\delta,rs}_{\widetilde{\op{G}}}(n))\,=\,\pi_1(R^{\delta,ss}_{\widetilde{\op{G}}}(n))$. 
Thus for $n$ large enough, we get
$$\pi_1(\mathcal{M}_{\widetilde{\op{G}}}^{\delta,rs})\,=\,
\pi_1(\mathcal{M}_{\widetilde{\op{G}}}^{\delta,ss})
\,=\,\pi_1(\mathcal{M}_{\widetilde{\op{G}}}^{\delta}(n)).$$
Now by taking limits and applying Lemma \ref{topostackcomm}, we get that
$\pi_1(\mathcal{M}_{\widetilde{\op{G}}}^{\delta,rs})\,\simeq\,
\pi_1(\mathcal{M}^{\delta}_{\widetilde{\op{G}}})$.
\end{proof}

\subsection{Twisted Moduli spaces}\label{sec:twistMS}

The twisted moduli space $\text{M}^{\delta}_{\widetilde{\op{G}}}$ associated to the triple 
$(X,\, \op{G},\, \delta)$ is defined just as the twisted moduli stack is defined (see Equation \eqref{stk-MG-delta}). 
As before, let $A$ be the subgroup of the center of $\widetilde{\op{G}}$ isomorphic to $\pi_1(\op{G})$. 
\begin{definition}\label{def:twims}
The space $\text{M}^{\delta}_{\widetilde{\op{G}},A}$ is defined to be the moduli space 
of semistable principal $C_{A}\widetilde{\op{G}}$--bundles $E$ on $X$ (see  Equation \eqref{f1})
such that the associated principal $T$--bundle obtained by extending the structure group of $E$ 
using the homomorphism $\det$ in \eqref{det} is the principal $T$--bundle
corresponding to $(\mathcal{O}_X(d_1p),\,\dots,\, \mathcal{O}_X(d_sp))$, where $\delta$ and 
$\vec{d}\,=\,(d_1,\,\dots,\,d_s)$ are related by \eqref{fe2}. 

For notational simplicity, we drop the subscript $A$ and write $\text{M}^{\delta}_{\widetilde{\op{G}}}$ for 
$\text{M}^{\delta}_{\widetilde{\op{G}},\, A}$. 
\end{definition}

Let
$$
\text{M}^{\delta,rs}_{\widetilde{\op{G}}}\,\subset\,
\text{M}^{\delta}_{\widetilde{\op{G}}}
$$
be the twisted moduli space of regularly stable principal $\op{G}$--bundles associated 
to the triple $(X,\, \op{G},\, \delta)$.
As before, we assume that $g \,\geq\, 3$ and for $g \,=\, 2$, either $\op{G}
\,\neq\, \op{SL}_2(\mathbb{C})$ or $\delta \,\neq\, 1$. 

\subsubsection{Presentation of moduli spaces}\label{section:spaces}

We continue with the same notations as in Section \ref{sec:pres-quot-stack}. 
By our constructions in Section \ref{sec:pres-quot-stack}, we get a map $C_A{\widetilde{\op{G}}} \rightarrow S$ 
which preserves the center. This induces a morphism $\mathcal{M}_{C_A{\widetilde{\op{G}}}}\rightarrow \mathcal{M}_{S}$, 
which, in turn, give the morphisms $\mathcal{M}_{\widetilde{\op{G}}}^{\delta}\rightarrow \mathcal{M}_{S}^{d'}$ (see Diagram 
\ref{eqn:arrows}). By the discussion in Section \ref{sec:pres-quot-stack}, the stack 
$\mathcal{M}_{\widetilde{\op{G}}}^{\delta,ss}$ (respectively, $\mathcal{M}_{S}^{d',ss}$) is represented as a 
stack quotient of $R^{\delta,ss}_{\widetilde{\op{G}}}$ (respectively, $R^{d',ss}_{S}$) by a reductive group 
$\Gamma_n$. From classical theory of existence of good quotients of moduli spaces of vector bundles on a 
curve, it follows that $\text{M}_{S}^{d'}$ is a good quotient of $R^{d',ss}_{S}$ by $\Gamma_n$. Now since 
semistability is preserved (\cite[Theorem 3.18]{RR}), the construction of 
$\text{M}^{\delta}_{\widetilde{\op{G}}}$ as a good quotient of $R^{\delta,ss}_{\widetilde{\op{G}}}$ by 
$\Gamma_n$ follows from Lemma 5.1 in \cite{Ram1}. 

\begin{corollary}\label{cor2} The variety ${\rm 
M}^{\delta,rs}_{\widetilde{\op{G}}}$ is simply connected.
\end{corollary}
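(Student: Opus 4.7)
The plan is to deduce the corollary from Lemma \ref{lem1} by comparing the coarse moduli space to the moduli stack via the natural forgetful map
$$
\Phi\,:\,\mathcal{M}^{\delta,rs}_{\widetilde{\op{G}}}\,\longrightarrow\,\text{M}^{\delta,rs}_{\widetilde{\op{G}}}\, .
$$
The key observation is that $\Phi$ is a gerbe, banded by the center
$$
Z\,:=\, Z(C_A \widetilde{\op{G}})\,=\,(Z(\widetilde{\op{G}})\times T)/A\, .
$$
Indeed, the definition of regular stability is engineered precisely so that the natural homomorphism from $Z$ to $\text{Aut}(E)$, coming from the right action of the structure group, is an isomorphism for every regularly stable principal $C_A \widetilde{\op{G}}$--bundle $E$ on $X$. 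Consequently, on $\mathcal{M}^{\delta,rs}_{\widetilde{\op{G}}}$ all automorphism groups are canonically identified with the single group $Z$, and the map to the coarse moduli space is the $Z$--rigidification of the stack.

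Once $\Phi$ is recognized as a $Z$--gerbe, its topological realization is a fibration with fiber $BZ$, giving a long exact sequence of homotopy groups that contains the segment
$$
\pi_1(\mathcal{M}^{\delta,rs}_{\widetilde{\op{G}}})\,\longrightarrow\,\pi_1(\text{M}^{\delta,rs}_{\widetilde{\op{G}}})\,\longrightarrow\,\pi_0(BZ)\,=\, 0\, .
$$
The vanishing of $\pi_0(BZ)$ is automatic, and notably no delicate case analysis on the finite component group $Z(\widetilde{\op{G}})/A\,\simeq\, Z(\op{G})$ of $Z$ is required, since it contributes only to $\pi_1(BZ)\,=\,\pi_0(Z)$, which sits harmlessly one step to the left. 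By Lemma \ref{lem1} the first group vanishes, forcing $\pi_1(\text{M}^{\delta,rs}_{\widetilde{\op{G}}})\,=\, 0$, as desired.

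The main obstacle is justifying the gerbe description of $\Phi$ and the associated Serre--type fibration in the Artin stack setting at hand---that is, checking that automorphism groups are locally constant (in fact globally constant, equal to $Z$) over the regularly stable locus, and that the comparison between the $C^\infty$ realization of the stack and the classifying space $BZ$ over $\text{M}^{\delta,rs}_{\widetilde{\op{G}}}$ is legitimate. Beyond this, the argument is purely formal and parallels the passage from Corollary \ref{cor1} to its stacky refinement Lemma \ref{lem1}.
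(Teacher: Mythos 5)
Your argument is correct and is essentially the paper's own proof: the map $\mathcal{M}^{\delta,rs}_{\widetilde{\op{G}}}\to {\rm M}^{\delta,rs}_{\widetilde{\op{G}}}$ is a gerbe banded by a (finite-type, hence connected-fibered $BZ$) group, so $\pi_1$ of the stack surjects onto $\pi_1$ of the coarse space, and Lemma \ref{lem1} finishes the proof. The only slight imprecision is the band: since the twisted stack is a fiber of $\det$ over a fixed $T$--bundle, the automorphism group of a regularly stable object is the kernel of $Z(C_A\widetilde{\op{G}})=(Z(\widetilde{\op{G}})\times T)/A\to T$, i.e.\ the center of $\widetilde{\op{G}}$ rather than all of $Z(C_A\widetilde{\op{G}})$ --- but this does not affect the argument, which only uses connectedness of the fiber.
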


\begin{proof}
The coarse moduli space for $\mathcal{M}^{\delta,rs}_{\widetilde{\op{G}}}$
is ${\rm M}^{\delta,rs}_{\widetilde{\op{G}}}$.
The morphism to the coarse moduli space 
\begin{equation}\label{gerbe-rs}
\mathcal{M}^{\delta,rs}_{\widetilde{\op{G}}}\, \longrightarrow\, 
{\rm M}^{\delta,rs}_{\widetilde{\op{G}}}
\end{equation}
is a gerbe banded by the center $\op{Z}(\widetilde{\op{G}})$ of $\widetilde{\op{G}}$. 
A typical fiber over a point $x \in \rm M_{\widetilde{\op G}}^{\delta, rs}(\mathbb C)$ is given by the 
classifying stack $\Gamma_x\,:=\, B(\op{Z}(\widetilde{\op G}))$, whose associated topological stack is
connected. Moreover, banded gerbes are weak Serre fibrations \cite[Section 4.4]{No3}. Therefore, using
the homotopy exact sequence for the morphism \eqref{gerbe-rs}, 
we conclude that the homomorphism 
$$\pi_1(\mathcal M_{\widetilde{\op G}}^{\delta, rs})\,\longrightarrow\,\pi_1(\rm M_{\widetilde{\op G}}^{\delta, rs})$$ 
induced by the morphism \eqref{gerbe-rs} is surjective. 
Finally, $\pi_1(\mathcal{M}^{\delta,rs}_{\widetilde{\op{G}}})\,=\,1$ by Lemma \ref{lem1}.
\end{proof}

\section{Fundamental group of a moduli space}

\subsection{Simply connected simple groups}

Let $X$ be a compact connected Riemann surface of genus $g\, \geq\, 2$. 
Let $\op{G}$ be a simple group with simply connected cover $\widetilde{\op{G}}$. 
Consider $\pi_1(\op{G})$ as a subgroup $A$ of the center of $\widetilde{\op{G}}$. 
As before, for any $\delta \in \pi_1(\op{G})$, let ${\rm M}^{\delta}_{\widetilde{\op{G}}}={\rm M}^{\delta}_{\widetilde{\op{G}},A}$
be the twisted moduli space (see Definition \ref{def:twims}) of semistable bundles associated to $(X,\, \op{G},\, \delta)$. Recall that $\op{G}$ is isomorphic to $\widetilde{\op{G}}/A$.

\begin{proposition}\label{prop1}
The moduli space ${\rm M}^{\delta}_{\widetilde{\op{G}}}$ is simply connected.
\end{proposition}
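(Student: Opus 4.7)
The plan is to bootstrap from Corollary \ref{cor2}, which already says that ${\rm M}^{\delta,rs}_{\widetilde{\op{G}}}$ is simply connected, by comparing the full moduli space with its regularly stable locus. The central fact I will use is the standard Zariski--Nagata-type statement: if $V$ is a normal irreducible complex variety and $Z\,\subset\, V$ is closed of codimension at least two, then the inclusion $V\setminus Z\,\hookrightarrow\, V$ induces a surjective homomorphism on fundamental groups. Applied to $V\,=\,{\rm M}^{\delta}_{\widetilde{\op{G}}}$ and $Z\,=\,{\rm M}^{\delta}_{\widetilde{\op{G}}}\setminus {\rm M}^{\delta,rs}_{\widetilde{\op{G}}}$, this will directly yield $\pi_1({\rm M}^{\delta}_{\widetilde{\op{G}}})\,=\,1$.

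So the proof reduces to verifying the two hypotheses. First, normality of ${\rm M}^{\delta}_{\widetilde{\op{G}}}$: the twisted moduli space is constructed as a GIT quotient of a smooth scheme parametrizing semistable bundles with suitable rigidifying data, so it is a normal irreducible projective variety by the usual GIT arguments (the good quotient of a normal variety is normal). Second, the codimension estimate: the complement of the regularly stable locus decomposes into the strictly semistable locus and the stable-but-not-regularly-stable locus. Both of these were treated in the proof of Lemma \ref{lem1} at the level of the affine Grassmannian using the references \cite[Proposition~3.4]{KN} (which requires the genus/group hypothesis from Section \ref{se3.1}) and \cite[(13.1)]{BLS:94}, \cite{BH}. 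Since the uniformization $\mathcal{Q}_{\widetilde{\op{G}}}\,\longrightarrow\,\mathcal{M}^{\delta}_{\widetilde{\op{G}}}$ is étale locally trivial (Proposition \ref{moretwisteduniformization}) and the gerbe $\mathcal{M}^{\delta}_{\widetilde{\op{G}}}\,\longrightarrow\, {\rm M}^{\delta}_{\widetilde{\op{G}}}$ has equidimensional fibres, the codimension bound $\geq 2$ transfers from the Grassmannian all the way down to the coarse moduli space ${\rm M}^{\delta}_{\widetilde{\op{G}}}$.

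Putting these together, both hypotheses of the Zariski--Nagata statement are met, so $\pi_1({\rm M}^{\delta,rs}_{\widetilde{\op{G}}})\,\twoheadrightarrow\,\pi_1({\rm M}^{\delta}_{\widetilde{\op{G}}})$ is surjective; combining with Corollary \ref{cor2} finishes the proof.

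The step I expect to be the main obstacle is invoking the codim-$2$ $\pi_1$-surjection cleanly: the moduli space ${\rm M}^{\delta}_{\widetilde{\op{G}}}$ is generally singular, so one cannot simply quote the smooth-variety version of this fact. One needs either to appeal to the normal-variety version directly, or to pass to a resolution of singularities and argue that the exceptional divisor does not enlarge the fundamental group (which is automatic for normal varieties because a loop around a codim-$2$ stratum bounds a transverse disk after a small perturbation). Everything else — normality, irreducibility, and the two codimension estimates — is either already established in the excerpt or is standard.
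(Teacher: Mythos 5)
Your main line of argument is essentially the paper's: reduce to the regularly stable locus, quote Corollary \ref{cor2}, use normality of ${\rm M}^{\delta}_{\widetilde{\op{G}}}$ \cite{Ram1}, and invoke surjectivity of $\pi_1$ of an open dense subset of a normal projective variety (the paper proves this surjectivity via a desingularization and Zariski's main theorem, using that ${\rm M}^{\delta,rs}_{\widetilde{\op{G}}}$ is exactly the smooth locus \cite{BH}). However, there is a genuine gap: Proposition \ref{prop1} is asserted for \emph{every} $g\,\geq\,2$ and every central $\delta$, while Corollary \ref{cor2} --- and the codimension estimate of \cite{KN} on which it and Lemma \ref{lem1} rest --- are only available under the standing hypothesis of Section \ref{se3.1}, which excludes the case $g\,=\,2$, $\widetilde{\op{G}}\,=\,\text{SL}(2,{\mathbb C})$, $\delta\,=\,1$. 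In that excluded case the codimension-two input genuinely fails (the strictly semistable locus is a divisor: the Kummer surface inside ${\mathbb P}^3$), so your argument does not cover it; the paper handles it separately by identifying ${\rm M}^{\delta}_{\widetilde{\op{G}}}$ with ${\mathbb C}{\mathbb P}^3$ using \cite{NR}. You need to add this case explicitly.

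A secondary issue: the codimension-two hypothesis in the surjectivity statement you quote is both more than you need and the shakiest step of your verification. Over the non-regularly-stable locus the morphism $\mathcal{M}^{\delta}_{\widetilde{\op{G}}}\,\longrightarrow\,{\rm M}^{\delta}_{\widetilde{\op{G}}}$ is not a gerbe --- automorphism groups jump and S-equivalence collapses positive-dimensional families --- so ``transferring'' a codimension bound from $\mathcal{Q}_{\widetilde{\op{G}}}$ down to the coarse moduli space is not automatic and would require a separate argument. But no such bound is needed for surjectivity: for a normal (projective) variety, $\pi_1$ of any dense Zariski-open subset surjects onto $\pi_1$ of the variety, density alone sufficing. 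This is precisely how the paper proceeds: it takes the dense open subset to be the smooth locus (which equals the regularly stable locus by \cite{BH}), resolves singularities, uses Zariski's main theorem for connectedness of the fibers of the resolution, and uses smoothness of the resolution to get surjectivity upstairs. If you drop the codimension step and argue with density only, your proof becomes the paper's proof of the main case; the remaining defect is solely the missing ${\mathbb C}{\mathbb P}^3$ case above.
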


\begin{proof}
First we consider the case where $g\,=\,2$, $\op{G}\, =\, \text{SL}(2,{\mathbb C})$ and $\delta\, =\, 1$. 
In this case, it follows from \cite[p.~27, Lemma 6.2 (ii) and p.~33, Theorem~2]{NR} that 
${\rm M}^{\delta}_{\widetilde{\op{G}}}\, =\, {\mathbb C}{\mathbb P}^3$, so this moduli space is 
simply connected.

Therefore, we assume that either $\op{G}\, \not=\, \text{SL}(2,{\mathbb C})$ or
$\delta\, \not=\, 1$ whenever $g\, =\, 2$. The Zariski open subset
\begin{equation}\label{e5}
{\rm M}^{\delta,rs}_{\widetilde{\op{G}}}\, \subset\, {\rm M}^{\delta}_{\widetilde{\op{G}}}
\end{equation}
is simply connected (Corollary \ref{cor2}). First observe that ${\rm M}^{\delta}_{\widetilde{\op{G}}}$ is a 
subspace of ${\rm M}^{\delta}_{C_A\widetilde{\op{G}}}$ realized as a fiber of the determinant map 
${\rm M}^{}_{C_A\widetilde{\op{G}}}\,\longrightarrow\, \op{M}_{T}$ in \eqref{det}. Since $C_A\widetilde{\op{G}}$ 
is reductive, we know by Corollary~3.4 of \cite{BH} that ${\rm M}^{\delta,rs}_{C_A\widetilde{\op{G}}}$ is the smooth 
locus of ${\rm M}^{\delta}_{C_A\widetilde{\op{G}}}$. Since ${\rm M}^{\delta,rs}_{C_A\widetilde{\op{G}}}$ is an \'etale 
locally trivial fiber bundle over a smooth variety with ${\rm M}^{\delta,rs}_{\widetilde{\op{G}}}$ as the typical fiber, 
it follows that ${\rm M}^{\delta,rs}_{\widetilde{\op{G}}}$ is the smooth locus of ${\rm M}^{\delta}_{\widetilde{\op{G}}}$. 

We note that if $Z$ is a normal projective variety, and $U_Z\, \subset\, Z$ is its smooth
locus, then the homomorphism $\pi_1(U_Z)\,\longrightarrow\, \pi_1(Z)$ induced
by the inclusion map is surjective. To prove this, take any desingularization 
$$
\sigma\ :\, \widehat{Z}\,\longrightarrow\, Z\, .
$$
By Zariski's main theorem (cf. \cite[p.~280, Ch.~III, Corollary~11.4]{Ha}) the fibers
of $\sigma$ are all connected. Therefore, the homomorphism
$$
\sigma_*\ :\, \pi_1(\widehat{Z})\,\longrightarrow\, \pi_1(Z)
$$
induced by $\sigma$ is surjective. Furthermore, the homomorphism
$\pi_1(\sigma^{-1}(U_Z))\,\longrightarrow\,\pi_1(\widehat{Z})$ induced
by the inclusion map is surjective, because $\widehat{Z}$ is smooth. But
$$
\sigma\vert_{\sigma^{-1}(U_Z)}\, :\, \sigma^{-1}(U_Z)\,\longrightarrow\, U_Z
$$
is an isomorphism. Hence combining the above observations we conclude that the
homomorphism $\pi_1(U_Z)\,\longrightarrow\, \pi_1(Z)$ is surjective.

Now, by construction (\cite[Lemma 7.3]{BLS:94} and \cite[Theorem~5.9]{Ram1}) the variety 
${\rm M}^{\delta}_{\widetilde{\op{G}}}$ is a good quotient (see Section \ref{section:spaces}) 
of a smooth scheme by a reductive group; hence it is normal. So the homomorphism of fundamental 
groups induced by the inclusion in \eqref{e5} is surjective. 
This implies that ${\rm M}^{\delta}_{\widetilde{\op{G}}}$ is simply connected, because
${\rm M}^{\delta,rs}_{\widetilde{\op{G}}}$ is simply connected by Corollary \ref{cor2}.
\end{proof}

\subsection{All simple groups}

As before, assume that $g\, \geq\, 2$.
Let $\op{G}$ be any simple group. Fix an element
\begin{equation}\label{de}
\delta\, \in\, \pi_1(\op{G})\, .
\end{equation}
As before, $\text{M}^\delta_{\op{G}}$ denotes the moduli space of semistable principal
$\op{G}$--bundles on $X$ of topological type $\delta$.

\begin{theorem}\label{thm1}
The moduli space ${\rm M}^\delta_{\op{G}}$ is simply connected.
\end{theorem}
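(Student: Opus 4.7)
\smallskip

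\noindent\textbf{Proof proposal.} The plan is to realize $\mathrm{M}^{\delta}_{\op{G}}$ as the quotient of the simply connected variety $\mathrm{M}^{\delta}_{\widetilde{\op{G}}}$ by an algebraic action of the finite abelian group $\Gamma := H^1(X,\, \pi_1(\op{G}))$, and then conclude simple connectedness from a generation property for the isotropy subgroups of the action. Writing $\op{G} = \widetilde{\op{G}}/A$ with $A\subset\widetilde{\op{G}}$ the center (so $A\cong\pi_1(\op{G})$), the short exact sequence $1\to A\to\widetilde{\op{G}}\to\op{G}\to 1$ gives $H^1(X,A)\to H^1(X,\widetilde{\op{G}})\to H^1(X,\op{G})$. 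Tensoring a semistable $C_A\widetilde{\op{G}}$-bundle with an $A$-torsor on $X$ leaves the induced $\op{G}$-bundle and the topological type unchanged and preserves semistability, so this yields an algebraic action of $\Gamma$ on $\mathrm{M}^{\delta}_{\widetilde{\op{G}}}$ whose orbits are precisely the fibers of the forgetful morphism $\mathrm{M}^{\delta}_{\widetilde{\op{G}}}\longrightarrow \mathrm{M}^{\delta}_{\op{G}}$. Hence $\mathrm{M}^{\delta}_{\op{G}}\cong \mathrm{M}^{\delta}_{\widetilde{\op{G}}}/\Gamma$ as complex varieties.

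Next I invoke the general topological principle: if a finite group $\Gamma$ acts algebraically on an irreducible simply connected normal complex variety $Y$, then $\pi_1(Y/\Gamma)$ is canonically isomorphic to $\Gamma/N$, where $N\subset\Gamma$ is the subgroup generated by the union of the isotropy subgroups $\Gamma_y$ as $y$ ranges over $Y$. Applying this with $Y=\mathrm{M}^{\delta}_{\widetilde{\op{G}}}$, which is simply connected by Proposition \ref{prop1} and normal by \cite{Ram1}, it suffices to show that the isotropy subgroups $\Gamma_{[E]}$ generate all of $\Gamma$.

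The isotropy $\Gamma_{[E]}$ at a semistable bundle $E$ can be identified with the image of the finite group $\mathrm{Aut}(\bar E)/Z(\op{G})$ in $\Gamma=H^1(X,A)$, where $\bar E$ is the $\op{G}$-bundle induced from $E$; this is exactly the mechanism alluded to in the introduction. To exhibit enough isotropy, I would use the almost commuting triples technology (invoked in the keywords): for any nonzero class $\gamma\in\Gamma$, one constructs a stable principal $\op{G}$-bundle obtained by reduction of structure group to a finite subgroup $F\subset\op{G}$ coming from an almost commuting pair/triple in $\widetilde{\op{G}}$, whose automorphism group modulo the center contains a lift of $\gamma$. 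Running over all $\gamma$, these classes generate $\Gamma$, giving $N=\Gamma$ and hence $\pi_1(\mathrm{M}^{\delta}_{\op{G}})=1$. The main obstacle is this last step: carrying out the construction of stable bundles with prescribed nontrivial extra automorphisms in a uniform, type-independent way, which is precisely where the classification of almost commuting elements in simple simply connected groups (and the genus $g\geq 2$ hypothesis) enters decisively. The borderline case $(\op{G},\delta,g)=(\mathrm{SL}(2,\mathbb{C}),1,2)$ excluded in Proposition \ref{prop1} would need to be handled separately via the explicit identification $\mathrm{M}^\delta_{\widetilde{\op{G}}}\cong\mathbb{CP}^3$.
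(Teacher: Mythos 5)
You have reproduced the paper's skeleton faithfully: write $\op{G}=\widetilde{\op{G}}/A$, let $\Gamma=H^1(X,A)$ act on the twisted moduli space ${\rm M}^{\delta}_{\widetilde{\op{G}}}$ by twisting with $A$--torsors, identify ${\rm M}^{\delta}_{\op{G}}$ with ${\rm M}^{\delta}_{\widetilde{\op{G}}}/\Gamma$, and apply Armstrong's theorem together with Proposition \ref{prop1} to reduce everything to showing that the isotropy subgroups generate $\Gamma$. But that last reduction is where the actual content of the theorem lies, and your proposal leaves it as an acknowledged ``obstacle'' rather than a proof: you say one ``would'' construct, for each $\gamma\in\Gamma$, a stable bundle with a prescribed extra automorphism via almost commuting triples, without carrying this out. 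The paper does carry it out, and the details matter. It first observes that, since $A$ is abelian, it suffices to find a fixed point for each $\theta\colon\pi_1(X)\to A$ whose image is cyclic of prime order; it then uses the surjection of the mapping class group onto ${\rm Sp}(2g,\mathbb Z)$ to choose a standard presentation with $\theta(b_i)=1$ for all $i$ and $\theta(a_i)=1$ for $i\geq 2$; via Ramanathan's identification of ${\rm M}^{\delta}_{\widetilde{\op{G}}}$ with representations of $F_{2g}$ into a compact form $\widetilde{\op{K}}$ sending the relator to $\delta'$, modulo conjugation, the action of $\theta$ becomes simply multiplication of the image of $a_1$ by $\theta(a_1)$; and finally the Borel--Friedman--Morgan classification of almost commuting triples produces $(x_1,x_2,x_3)\in\widetilde{\op{K}}^3$ with $[x_1,x_2]=\delta'$, $[x_3,x_1]=\theta(a_1)$, $[x_3,x_2]=1$, so the representation $a_1\mapsto x_1$, $b_1\mapsto x_2$, all other generators $\mapsto 1$, is fixed up to conjugation by $x_3$. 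Note that nothing forces this fixed point to be a stable bundle, and you never need to compute the isotropy group as an image of ${\rm Aut}(\bar E)/Z(\op{G})$ (a claim that is delicate at the level of S--equivalence classes); exhibiting one fixed point per generator $\theta$ is enough.

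Two further remarks. First, the paper also gives a second, much softer way to close exactly the gap you left open: ${\rm M}^{\delta}_{\widetilde{\op{G}}}$ is a unirational projective variety, so by equivariant resolution of singularities and the holomorphic Lefschetz fixed-point formula (using $H^i(\mathcal O)=0$ for $i>0$ on the resolution) every finite-order automorphism has a fixed point; applied to each element of $\Gamma$ this yields the isotropy generation with no almost-commuting classification and no case analysis, and it would turn your sketch into a complete proof more easily than the route you propose. Second, the borderline case $\op{G}={\rm SL}(2,\mathbb C)$, $\delta=1$, $g=2$ does not require separate treatment here: Proposition \ref{prop1} already covers it (there ${\rm M}^{\delta}_{\widetilde{\op{G}}}\cong\mathbb{CP}^3$), and the quotient argument then runs verbatim.
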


\begin{proof}
Let $\gamma\, :\, \widetilde{\op{G}}\, \longrightarrow\, \op{G}$ be the universal
covering. The subgroup $\text{kernel}(\gamma)\,\subset\, \widetilde{\op{G}}$
will be denoted by $A$. This subgroup $A$ is contained in the center of
$\widetilde{\op{G}}$, and
\begin{equation}\label{f4}
A\,=\, \pi_1(\op{G})\, .
\end{equation}
Let
\begin{equation}\label{e6}
\Gamma\, :=\, \text{Hom}(\pi_1(X),\, A)\,=\, H^1(X,\, A)
\end{equation}
be the isomorphism classes of principal $A$--bundles on $X$. We note that
$\Gamma$ is a finite abelian group. The group structure on $A$ produces a
group structure on $\Gamma$ because $A$ is abelian.

Let ${\rm M}^{\delta}_{\widetilde{\op{G}}}$ be the twisted moduli space of semistable
principal bundles associated to $(X,\, \op{G},\, \delta)$, where $\delta$
is the element in \eqref{de}. We will construct an action
of $\Gamma$ on ${\rm M}^{\delta}_{\widetilde{\op{G}}}$. The homomorphism
$$
\widetilde{\op{G}}\times A \, \longrightarrow\, \widetilde{\op{G}}\, , \ \
(z,\, a)\, \longmapsto\, za
$$
produces a homomorphism
$$
\tau\, :\, C_{A}\widetilde{\op{G}}\times A\, \longrightarrow\, C_{A}\widetilde{\op{G}}\, ,
$$
where $C_{A}\widetilde{\op{G}}$ is the quotient group in \eqref{f1}. Given a principal
$C_{A}\widetilde{\op{G}}$--bundle $E$ and a principal $A$--bundle $F$ on $X$, we have
a principal $C_{A}\widetilde{\op{G}}$--bundle $\tau_*(E\times_X F)$, which is the
extension of structure group of the principal
$(C_{A}\widetilde{\op{G}}\times A)$--bundle $E\times_X F$, using the above homomorphism
$\tau$. Clearly, $\tau_*(E\times_X F)$ is semistable if and only if $E$ is semistable.
Consequently, we get an action on ${\rm M}^{\delta}_{\widetilde{\op{G}}}$ of the group
$\Gamma$ in \eqref{e6}
\begin{equation}\label{f3}
\widehat{\tau}\, :\, {\rm M}^{\delta}_{\widetilde{\op{G}}}\times\Gamma \, \longrightarrow\,
{\rm M}^{\delta}_{\widetilde{\op{G}}}\, .
\end{equation}

Consider the projection to the second factor
$$
C_{A}\widetilde{\op{G}}\,\longrightarrow\, (\widetilde{\op{G}}/A)\times (T/A)
\,=\, \op{G}\times (T/A) \, \longrightarrow\, \op{G}\, ,
$$
where $C_{A}\widetilde{\op{G}}$ is defined in \eqref{f1}. Given a principal
$C_{A}\widetilde{\op{G}}$--bundle on $X$, we have a principal $\op{G}$--bundle obtained by extending the
structure group using this homomorphism. This produces a morphism
${\rm M}^{\delta}_{\widetilde{\op{G}}}\, \longrightarrow\,\text{M}^\delta_{\op{G}}$. This
morphism clearly factors through the quotient ${\rm M}^{\delta}_{\widetilde{\op{G}}}/\Gamma$
for the above action of $\widehat{\tau}$ on ${\rm M}^{\delta}_{\widetilde{\op{G}}}$. The resulting morphism
$$
{\rm M}^{\delta}_{\widetilde{\op{G}}}/\Gamma\,\longrightarrow\, \text{M}^\delta_{\op{G}}
$$
is an isomorphism.

The homomorphism $\Gamma\, \longrightarrow\, \text{Aut}({\rm M}^{\delta}_{\widetilde{\op{G}}})$ given by the
above action of $\widehat{\tau}$ on ${\rm M}^{\delta}_{\widetilde{\op{G}}}$ is injective. To prove this, take any
nontrivial element
$h_0\, \in\, \text{Hom}(\pi_1(X),\, A)\,=\, \Gamma$. Let $F\, \longrightarrow\, X$ be the principal $A$--bundle
corresponding to $h_0$. Let
$$
h\, :\, \widetilde{X}\, \longrightarrow\, X
$$
be the \'etale Galois covering corresponding to $\text{kernel}(h_0)\, \subset\, \pi_1(X)$. The pullback
$h^*F\, \longrightarrow\,\widetilde{X}$ is a trivial principal $A$--bundle. Take any
principal $C_{A}\widetilde{\op{G}}$--bundle $E$ on $X$ such that
\begin{itemize}
\item the pullback $h^*E$ is regularly stable, and

\item $E$ lies in the moduli space ${\rm M}^{\delta}_{\widetilde{\op{G}}}$.
\end{itemize}
Since $h^*F$ is a trivial principal $A$--bundle, it follows that an isomorphism between
$E$ and $\tau_*(E\times_X F)$ produces an automorphism of $h^*E$; such an automorphism
of $h^*E$ is not given by
an element of the center of $C_{A}\widetilde{\op{G}}$ because $F$ is nontrivial. Since
$h^*E$ is regularly stable, it follows that the point of ${\rm M}^{\delta}_{\widetilde{\op{G}}}$
given by $E$ is not fixed by the action of
$h$ on ${\rm M}^{\delta}_{\widetilde{\op{G}}}$. Therefore, the above homomorphism
$$
\Gamma\, \longrightarrow\, \text{Aut}({\rm M}^{\delta}_{\widetilde{\op{G}}})
$$
is injective.

The fundamental group of the quotient of a path connected, simply connected, locally 
compact metric space by a faithful action of a finite group $B$ is the quotient of 
$B$ by the normal subgroup of it generated by all the isotropy subgroups 
\cite[p.~299, Theorem]{Am}. We shall apply this result to the action in \eqref{f3}. 
Note that the moduli space ${\rm M}^{\delta}_{\widetilde{\op{G}}}$ is simply 
connected by Proposition \ref{prop1}.

Since $A$ is abelian, the group $\Gamma$ in \eqref{e6} is generated by
the homomorphisms $\pi_1(X)\, \longrightarrow\, A$ such that the image is
a cyclic subgroup of $A$. Take any
\begin{equation}\label{th}
\theta\, :\, \pi_1(X)\, \longrightarrow\, A
\end{equation}
such that $\theta(\pi_1(X))$ is a cyclic subgroup of $A$; the order of $\theta$ will be denoted by
$m_0$. In view of the
above mentioned result of \cite{Am}, to prove that
$\text{M}^\delta_{\op{G}}$ is simply connected it suffices to show that the action
of $\theta$ on ${\rm M}^{\delta}_{\widetilde{\op{G}}}$ has a
fixed point. This result was proved in \cite[Lemma 7.4(b)]{BLS:94}. We give another proof of this fact and also recall the proof in \cite{BLS:94}. 
\subsubsection{First Proof}
It can be shown that there is a set of generators $\{a_1,\, \cdots, \, a_g,\,b_1,\,
\cdots ,\,b_g\}$ of standard type of $\pi_1(X)$ with a single relation
$$
\prod_{i=1}^g a_ib_ia^{-1}_ib^{-1}_i\, =\, 1
$$
such that
\begin{enumerate}
\item $\theta(b_i)\, =\, 1$ for all $1\, \leq\, i\, \leq\, g$, and

\item $\theta(a_i)\, =\, 1$ for all $2\, \leq\, i\, \leq\, g$.
\end{enumerate}
Indeed, for any element $\underline{\ell}\, :=\, (\ell_1, \,\dots, \, \ell_{2g})\,
\in\,({\mathbb Z}/m_0{\mathbb Z})^{2g}$ (recall that $m_0$ is the order of the image
of $\theta$), there is an element $A\, \in\, \text{Sp}(2g,
{\mathbb Z})$ such that
$$
\underline{\ell} A\,=\, (\ell_1,\, 0,\, 0, \,\dots, \, 0)\, .
$$
The image of the natural homomorphism from the mapping class group for $X$ to
the automorphism group $\text{Aut}(H_1(X,\, {\mathbb Z}))$ is the symplectic group
associated to the symplectic form on $H_1(X,\, {\mathbb Z})$
defined by the cap product. Combining these it follows that given any
standard presentation of $\pi_1(X)$, there is an element of the mapping class group
that takes it to a presentation of $\pi_1(X)$ satisfying the above conditions.
Clearly, the above presentation of $\pi_1(X)$ depends on $\theta$.

Fix a maximal compact subgroup
$$
\widetilde{\op{K}}\, \subset\, \widetilde{\op{G}}
$$
Let $F_{2g}$ denote the free group generated by $\{a_1,\, \dots, \, a_g,\,b_1,\,
\dots ,b_g\}$, so $\pi_1(X)$ is a quotient of
$F_{2g}$. Let ${\mathcal R}$ denote the space of all homomorphisms
$$\beta\, :\, F_{2g}\, \longrightarrow\, \widetilde{\op{K}}$$
such that $\beta(\prod_{i=1}^g a_ib_ia^{-1}_ib^{-1}_i) \,=\, \delta'$, where
$\delta'\, \in\, A$ is the element corresponding to $\delta\,\in\,
\pi_1(\op{G})$ (see \eqref{f4}, \eqref{de}). The group $\widetilde{\op{K}}$
acts on ${\mathcal R}$ through the conjugation action of $\widetilde{\op{K}}$ on
itself. A theorem of Ramanathan \cite{Ra} shows that
$$
{\rm M}^{\delta}_{\widetilde{\op{G}}}\,=\, {\mathcal R}/\widetilde{\op{K}}.
$$ The action $\widehat{\tau}$ (see \eqref{f3})
of $\theta$ (see \eqref{th}) on ${\rm 
M}^{\delta}_{\widetilde{\op{G}}}$ sends any homomorphism $\beta$ as above to the
homomorphism defined as follows:
\begin{itemize}
\item $b_i\, \longmapsto\, \beta(b_i)$ for all $1\, \leq\, i\, \leq\, g$,

\item $a_i\, \longmapsto\, \beta(a_i)$ for all $2\, \leq\, i\, \leq\, g$, and

\item $a_1\, \longmapsto\, \beta(a_1)\theta(a_1)$.
\end{itemize}

Define the subset of $\widetilde{\rm K}^{^3}$
$$
\widetilde{\mathcal S}\,:=\,\{(x_1,\, x_2,\, x_3)\, \in\, \widetilde{\rm K}^{^3}\, \mid\,
[x_1,\, x_2]\,=\, \delta',\, [x_3,\, x_1]\,=\, \theta(a_1),\, [x_3,\, x_2]\,=\,1\}\, .
$$
The number of connected components of $\widetilde{\mathcal S}$ coincides with that
of the quotient space $${\mathcal S}\,:=\, \widetilde{\mathcal S}/\widetilde{\op{K}}$$
because $\widetilde{\op{K}}$ is connected. The set of
connected components of ${\mathcal S}$ is described in \cite[p.~6, Theorem~1.5.1(3)]{BFM}; if we set
$$
C\, =\,
\begin{pmatrix}
1 & \delta' & \theta(a_1)^{-1}\\
(\delta')^{-1} & 1 & 1\\
\theta(a_1) & 1 & 1
\end{pmatrix}
$$
in \cite[p.~6, Theorem~1.5.1]{BFM}, and $\op{G}$ in \cite[p.~6, Theorem~1.5.1]{BFM}
to be $\widetilde K$, then the above quotient ${\mathcal S}$ coincides
with the space ${\mathcal T}_{\op{G}}(C)$ in \cite[p.~6, Theorem~1.5.1]{BFM}.
Setting $k\,=\,1$ in \cite[p.~6, Theorem~1.5.1(3)]{BFM} we
conclude that ${\mathcal T}_{\op{G}}(C)\,=\, {\mathcal S}$ is nonempty because the
Euler $\varphi$--function sends $1$ to $1$.

Take any triple $(x_1,\, x_2,\, x_3)\, \in\, \widetilde{\mathcal S}$. Let
$$\beta_0\,\in \,\text{Hom}(F_{2g},\, \widetilde{\op{K}})$$ be the homomorphism
defined by
\begin{itemize}
\item $b_i\, \longmapsto\, 1$ for all $2\, \leq\, i\, \leq\, g$,

\item $a_i\, \longmapsto\, 1$ for all $2\, \leq\, i\, \leq\, g$,

\item $a_1\, \longmapsto\, x_1$, and

\item $b_1\, \longmapsto\, x_2$.
\end{itemize}
Note that $\beta_0\, \in\, {\mathcal R}$. Let
$$
\beta'_0\,\in\, {\mathcal R}/\widetilde{\op{K}}\,=\, 
{\rm M}^{\delta}_{\widetilde{\op{G}}}
$$
be the image of $\beta_0$ under the quotient map. We have
$x_3x_1x^{-1}_3\,=\, x_1\theta(a_1)$, because $[x_3,\, x_1]\,=\, \theta(a_1)$. In view
of this and the third condition that $[x_3,\, x_2]\,=\,1$, we conclude from the above
description of the action of $\Gamma$ on ${\mathcal R}/\widetilde{\op{K}}\,=\,
{\rm M}^{\delta}_{\widetilde{\op{G}}}$ that the above point $\beta'_0$ is fixed
by the action of $\theta$. As noted before, this completes the proof using
\cite[p.~299, Theorem]{Am}.
\end{proof}

\subsubsection{Second Proof}
The following proof is well known \cite[Lemma 7.2(b)]{BLS:94}, 
but we recall it for completeness of the exposition:

Recall that $\Gamma\,=\,H^1(X,\,A)$ acts on $\op{M}_{\widetilde{\op{G}}}^{\delta}$, where 
$\widetilde{\op{G}}$ is simply connected. Thus every element of $\gamma \,\in\, \Gamma$ 
gives an automorphism of $\op{M}_{\widetilde{\op{G}}}^{\delta}$ of finite order. First following an argument in
\cite[Corollary 6.3]{KNR}, we show that $\op{M}_{\widetilde{\op{G}}}^{\delta}$ is unirational. 
By uniformization theorem, Proposition \ref{uniformization}, we get a surjection from the affine Grassmannian 
$\mathcal{Q}_{\widetilde{\op{G}}}$ to the moduli stack $\mathcal{M}_{\widetilde{\op{G}}}^{\delta}$. 
In particular there is a surjection from an open subset of $\mathcal{Q}_{\widetilde{\op{G}}}$ parametrizing 
semistable bundle to $\op{M}_{\widetilde{\op{G}}}^{\delta}$. Since $\mathcal{Q}_{\widetilde{\op{G}}}$ is a direct 
limit of an increasing sequence of generalized Schubert varieties, it follows that 
$\op{M}_{\widetilde{\op{G}}}^{\delta}$ is unirational. 

\begin{lemma}
Let $Y$ be an unirational, projective variety over $\mathbb{C}$. Then any finite 
order automorphism of $Y$ must have a fixed point. 
\end{lemma}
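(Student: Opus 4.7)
The plan is to prove the lemma via the holomorphic Lefschetz fixed point theorem applied to an equivariant resolution. Let $f\colon Y\to Y$ be the given automorphism of finite order.

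First I would invoke Hironaka's equivariant resolution of singularities for the finite cyclic group $\langle f\rangle$ to produce a smooth projective variety $\widetilde Y$ together with a birational morphism $\sigma\colon \widetilde Y \to Y$ and an automorphism $\widetilde f\colon \widetilde Y \to \widetilde Y$ of finite order satisfying $\sigma\circ\widetilde f = f\circ\sigma$. Any dominant rational map from $\mathbb{P}^n$ to $Y$ witnessing the unirationality of $Y$, composed with the rational inverse of $\sigma$, yields a dominant rational map from $\mathbb{P}^n$ to $\widetilde Y$, so $\widetilde Y$ is again unirational.

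Next I would use the standard fact that for any smooth projective unirational complex variety the Hodge numbers $h^{i,0}$ vanish for $i>0$ (since $h^{i,0}$ is a birational invariant and vanishes on $\mathbb{P}^n$), so by Hodge symmetry $H^i(\widetilde Y,\,\mathcal{O}_{\widetilde Y}) = 0$ for every $i>0$. Consequently the holomorphic Lefschetz number of $\widetilde f$ is
$$
\sum_{i\geq 0}(-1)^i\op{tr}\bigl(\widetilde f^{\,*}\mid H^i(\widetilde Y,\,\mathcal{O}_{\widetilde Y})\bigr)\,=\,\op{tr}\bigl(\widetilde f^{\,*}\mid \mathbb{C}\bigr)\,=\,1\,.
$$

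Finally, the (generalized) holomorphic Lefschetz fixed point theorem then forces $\widetilde f$ to have a fixed point on $\widetilde Y$, since otherwise all local contributions would vanish and the alternating sum above would be $0$. The image under $\sigma$ of such a fixed point is a fixed point of $f$ by the $\langle f\rangle$--equivariance of $\sigma$, which proves the lemma. The only substantive input is Hironaka's equivariant resolution in characteristic zero; apart from that the argument is routine, so I do not anticipate any serious obstacle.
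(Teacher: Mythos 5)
Your proposal is correct and follows essentially the same route as the paper: an equivariant resolution of singularities reduces to the smooth case, where $H^i(\widetilde Y,\mathcal{O}_{\widetilde Y})=0$ for $i>0$ by unirationality makes the holomorphic Lefschetz number equal to $1$, forcing a fixed point that descends to $Y$. The paper's proof is the same argument, merely presented with the smooth case first and the equivariant resolution step second.
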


\begin{proof}
Let us assume that $Y$ is smooth. Since $H^i(Y,\,\mathcal{O}_Y)\,=\,0$ for all 
$i\,>\,0$, by the holomorphic Lefschetz fixed-point formula, any finite order 
automorphism of $Y$ must have a fixed point. Thus we are done. If $Y$ is singular, 
let $C$ be the cyclic group generated by the finite order automorphism. Let 
$\widetilde{Y}$ be a $C$-equivariant resolution of singularities, \cite{BM}, \cite{V}, of 
$\widetilde{Y}$. By the previous step, we get a fixed point of $\widetilde{Y}$ under 
the action of any element $c\,\in\, C$. Since the resolution is $C$-equivariant, we 
get a fixed point of $Y$ under the action of $c$. This completes the proof.
\end{proof}

\subsection{The case of reductive groups}\label{addition}

First assume that $\op{G}$ is any connected semisimple affine algebraic group defined
over $\mathbb C$. Take any $\delta\, \in\, \pi_1(\op{G})$. Let ${\rm 
M}^{\delta}_{\op{G}}$ denote the moduli space of semistable principal 
$\op{G}$--bundles on $X$ of topological type $\delta$.

\begin{corollary}\label{cor3}
The moduli space ${\rm M}^{\delta}_{\op{G}}$ is simply connected.
\end{corollary}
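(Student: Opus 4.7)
The plan is to mimic the Second Proof of Theorem \ref{thm1} in the semisimple setting. Let $\widetilde{\op{G}}$ denote the universal cover of $\op{G}$; it is semisimple and simply connected, so it decomposes as $\widetilde{\op{G}} = \prod_{i=1}^{k} \widetilde{\op{G}}_i$ with each $\widetilde{\op{G}}_i$ simple and simply connected. Set $A := \ker(\widetilde{\op{G}} \to \op{G}) \subset Z(\widetilde{\op{G}})$, so that $A = \pi_1(\op{G})$. The construction of the twisted moduli space ${\rm M}^{\delta}_{\widetilde{\op{G}}}$ in Section 3, together with the action of $\Gamma := H^1(X, A)$ on it used in the proof of Theorem \ref{thm1}, requires $\widetilde{\op{G}}$ only to be semisimple and simply connected, so both carry over unchanged, and the quotient ${\rm M}^{\delta}_{\widetilde{\op{G}}}/\Gamma$ is identified with ${\rm M}^{\delta}_{\op{G}}$.

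The first step is to verify that ${\rm M}^{\delta}_{\widetilde{\op{G}}}$ is simply connected. Although Proposition \ref{prop1} and its supporting results (Lemma \ref{simplyconnected}, Lemma \ref{lem1}, Corollary \ref{cor2}) are stated for simple simply connected $\widetilde{\op{G}}$, the whole chain of arguments extends to the semisimple simply connected case. The affine Grassmannian factors as $\mathcal{Q}_{\widetilde{\op{G}}} = \prod_i \mathcal{Q}_{\widetilde{\op{G}}_i}$, and in particular remains simply connected; the semistability and regular stability conditions on a $C_A\widetilde{\op{G}}$-bundle reduce factor-by-factor under the product decomposition (parabolics and their characters of a product group split as products), so the codimension at least two estimates of \cite{KN} and \cite{BLS:94} transfer to $\mathcal{Q}_{\widetilde{\op{G}}}$; the normality of ${\rm M}^{\delta}_{\widetilde{\op{G}}}$ and the identification of the regularly stable locus with the smooth locus are standard for semisimple groups. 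Combining these ingredients as in the proof of Proposition \ref{prop1} gives $\pi_1({\rm M}^{\delta}_{\widetilde{\op{G}}}) = 1$.

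Following the Second Proof of Theorem \ref{thm1}, the twisted moduli space ${\rm M}^{\delta}_{\widetilde{\op{G}}}$ is a projective unirational variety over $\mathbb{C}$ (classical; cf.\ \cite{BLS:94, KNR}). By the lemma appearing in that Second Proof, every finite order automorphism of a unirational projective variety over $\mathbb{C}$ has a fixed point. Applied to the action of the finite group $\Gamma$ on ${\rm M}^{\delta}_{\widetilde{\op{G}}}$, this gives that every $\gamma \in \Gamma$ fixes some point of ${\rm M}^{\delta}_{\widetilde{\op{G}}}$. Consequently the subgroup of $\Gamma$ generated by the isotropy subgroups equals $\Gamma$, and Armstrong's theorem \cite[p.~299, Theorem]{Am} yields $\pi_1({\rm M}^{\delta}_{\op{G}}) = \pi_1({\rm M}^{\delta}_{\widetilde{\op{G}}}/\Gamma) = 1$.

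The main technical obstacle is the extension of Proposition \ref{prop1} itself, and in particular the codimension estimates of \cite{KN} and \cite{BLS:94}, from simple simply connected to semisimple simply connected groups. Reducing these to the simple factors via the product decomposition $\widetilde{\op{G}} = \prod_i \widetilde{\op{G}}_i$ should be routine, but it is the step requiring the most attention; the remainder of the argument (unirationality plus Armstrong) is a direct transcription of the Second Proof of Theorem \ref{thm1}.
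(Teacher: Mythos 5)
Your strategy is sound, but it is genuinely different from the paper's own proof of Corollary \ref{cor3}, and one of your supporting claims needs a patch. The paper argues \emph{downward}: it lets $\op{Z}$ be the center of $\op{G}$, acts on ${\rm M}^{\delta}_{\op{G}}$ by the finite group of principal $\op{Z}$--bundles, identifies the quotient with $\prod_i {\rm M}^{\delta_i}_{\op{G}_i}$ for the simple adjoint factors $\op{G}_i$ of $\op{G}/\op{Z}$ as in \eqref{y1}, and invokes Theorem \ref{thm1} for each factor. You instead go \emph{upward} to the universal cover $\widetilde{\op{G}}=\prod_i\widetilde{\op{G}}_i$ and rerun the whole Armstrong/fixed--point mechanism of Theorem \ref{thm1} with $\Gamma=H^1(X,A)$, $A=\pi_1(\op{G})$. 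This alternative is legitimate, and it has the merit of making completely explicit how simple connectivity passes across the finite quotient (Armstrong's theorem \cite{Am} plus a fixed point for every element of $\Gamma$), a step the paper's downward reduction leaves rather terse; the price is that you must extend the inputs of Proposition \ref{prop1} from simple to semisimple simply connected groups, which the paper's route avoids.

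The one genuine imprecision is your assertion that the codimension--two estimates of \cite{KN} and \cite{BLS:94} ``transfer factor-by-factor'': this fails precisely in the exceptional situation the paper is careful about, namely $g=2$ with a simple factor $\widetilde{\op{G}}_i={\rm SL}(2,\mathbb C)$ whose component of the twist is trivial (which is why Proposition \ref{prop1} treats that case separately via the Narasimhan--Ramanan identification of the moduli space with ${\mathbb C}{\mathbb P}^3$ \cite{NR}). The cleanest repair, which also dissolves the ``main technical obstacle'' you flag: since $\delta'\in A\subset \prod_i Z(\widetilde{\op{G}}_i)$ decomposes as $(\delta'_1,\dots,\delta'_k)$, the twisted moduli space itself factors, ${\rm M}^{\delta}_{\widetilde{\op{G}}}\cong \prod_i {\rm M}^{\delta'_i}_{\widetilde{\op{G}}_i}$ (seen, for instance, from Ramanathan's description by homomorphisms of $F_{2g}$ into $\prod_i\widetilde{\op{K}}_i$ with prescribed central value of the relator, used in the First Proof of Theorem \ref{thm1}); so simple connectivity of ${\rm M}^{\delta}_{\widetilde{\op{G}}}$ follows by applying Proposition \ref{prop1} as stated, exceptional case included, to each factor, with no need to redo the stack--level codimension arguments. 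With that adjustment, unirationality of the product, the fixed--point lemma from the Second Proof, the identification ${\rm M}^{\delta}_{\widetilde{\op{G}}}/\Gamma={\rm M}^{\delta}_{\op{G}}$ (which you assert at the same level of detail as the paper does in the simple case), and Armstrong's theorem complete your argument.
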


\begin{proof}
Let $\op{Z}\, \subset\, \op{G}$ be the center. The quotient $\op{G}/\op{Z}$ is 
isomorphic to $\prod_{i=1}^d \op{G}_i$, where each $\op{G}_i$ is simple with trivial 
center. The image of $\delta$ in $\pi_1(\op{G}_i)$ under the quotient map $$\op{G}/\op{Z}\, 
=\, \prod_{i=1}^d \op{G}_i\,\longrightarrow\, \op{G}_i$$ will be denoted by
$\delta_i$. Let ${\rm M}^{\delta_i}_{\op{G}_i}$ be the moduli space of semistable principal 
$\op{G}_i$--bundles on $X$ of topological type $\delta_i$.

The isomorphism classes of principal $\op{Z}$--bundles on $X$ will be denoted by
$\Gamma$. The homomorphism $\op{G}\times \op{Z}\, \longrightarrow\, \op{G}$,
$(x,\, z) \, \longmapsto\, xz$, produces an action of the abelian group $\Gamma$
on ${\rm M}^{\delta}_{\op{G}}$. We have
\begin{equation}\label{y1}
\prod_{i=1}^d{\rm M}^{\delta_i}_{\op{G}_i}\,=\,
{\rm M}^{\delta}_{\op{G}}/\Gamma\, .
\end{equation}
Now $\prod_{i=1}^d{\rm M}^{\delta_i}_{\op{G}_i}$ is simply
connected by Theorem \ref{thm1}. Therefore, from \eqref{y1} we conclude that
${\rm M}^{\delta}_{\op{G}}$ is simply connected.
\end{proof}

Finally, let $\op{G}$ be any connected reductive affine algebraic group defined over
$\mathbb C$. The commutator subgroup $[\op{G},\, \op{G}]$ is connected semisimple, and
there is a short exact sequence of groups
\begin{equation}\label{y2}
1\, \longrightarrow\, [\op{G},\, \op{G}]\, \longrightarrow\,\op{G}
\, \stackrel{q}{\longrightarrow}\, \op{Q}\,:=\, \op{G}/[\op{G},\, \op{G}]\, \longrightarrow\,
1\, ,
\end{equation}
where the quotient $\op{Q}$ is a product of copies of the multiplicative group
${\mathbb G}_m$. 

Take any $\delta\, \in\, \pi_1(\op{G})$. The image of $\delta$ in
$\pi_1(\op{Q})$ under the above projection $q$ will be denoted by $\alpha$. Let ${\rm
M}^{\delta}_{\op{G}}$ denote the moduli space of semistable principal
$\op{G}$--bundles on $X$ of topological type $\delta$. The moduli space of
principal $\op{Q}$--bundles on $X$ of topological type $\alpha$ will be denoted
by $J^{^\alpha}_{\op{Q}}(X)$. We note that $J^{^\alpha}_{\op{Q}}(X)$ is isomorphic to
$(\text{Pic}^0(X))^d$, where $d$ is the dimension of $\op{Q}$. Therefore, we have
$$
\pi_1(J^{\alpha}_{\op{Q}}(X))\, =\, {\mathbb Z}^{2gd}\, .
$$

The projection $q$ in \eqref{y2} induces a morphism
\begin{equation}\label{y3}
\widetilde{q}\, :\, {\rm M}^{\delta}_{\op{G}}\,\longrightarrow\,
J^{\alpha}_{\op{Q}}(X)\, .
\end{equation}

\begin{corollary}\label{cor4}
The homomorphism
$$
\widetilde{q}_*\, :\, \pi_1({\rm M}^{\delta}_{\op{G}})\,\longrightarrow\,
\pi_1(J^{\alpha}_{\op{Q}}(X))
$$
induced by the projection $\widetilde{q}$ in \eqref{y3} is an isomorphism.
\end{corollary}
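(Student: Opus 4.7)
The plan is to use the fact, already noted in the introduction, that $\widetilde{q}$ is an \'etale locally trivial fibration, and then apply the long exact sequence of homotopy groups. Since \'etale morphisms of complex algebraic varieties are local isomorphisms in the analytic topology, \'etale local triviality implies analytic local triviality: around any point of $J^{\alpha}_{\op{Q}}(X)$, $\widetilde{q}$ is a product projection. Thus $\widetilde{q}$ is a Serre fibration in the classical topology, and for a typical fiber $F$ we obtain the exact sequence
\begin{equation*}
\pi_1(F)\,\longrightarrow\,\pi_1({\rm M}^{\delta}_{\op{G}})\,\xrightarrow{\widetilde{q}_*}\,\pi_1(J^{\alpha}_{\op{Q}}(X))\,\longrightarrow\,\pi_0(F)\,\longrightarrow\,\pi_0({\rm M}^{\delta}_{\op{G}})\, .
\end{equation*}

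Next, I would identify $F$. Since $\op{G}$ is reductive, it is an almost direct product of its connected center $Z\,:=\,Z(\op{G})^0$ and its connected semisimple derived subgroup $H\,:=\,[\op{G},\op{G}]$; writing $\widetilde{H}$ for the universal cover of $H$, there is a finite central subgroup $A\,\subset\,\widetilde{H}\,\times\, Z$ with $\op{G}\,\simeq\,(\widetilde{H}\times Z)/A$. Under this presentation, fixing the image of a $\op{G}$-bundle in $J^{\alpha}_{\op{Q}}(X)$ pins down the $Z$-factor modulo the image of $A$ in $Z$, so the fiber $F$ is identified with a twisted moduli space of semistable $\widetilde{H}$-bundles on $X$ in the sense of Section~3, of the topological type induced from $\delta$. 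Decomposing $\widetilde{H}$ as a product $\prod_i \widetilde{\op{G}}_i$ of simple simply-connected factors, $F$ correspondingly decomposes (possibly after a finite \'etale cover) as a product $\prod_i \text{M}^{\delta_i}_{\widetilde{\op{G}}_i}$. Each factor is simply connected by Proposition \ref{prop1}, and connected as a consequence of its uniformization (Proposition \ref{moretwisteduniformization}) together with the connectedness of each component of $\mathcal{Q}_{\widetilde{\op{G}}_i}$. Hence $\pi_1(F)\,=\,0$ and $\pi_0(F)\,=\,0$, and the long exact sequence collapses to the desired isomorphism $\widetilde{q}_*\,:\,\pi_1({\rm M}^{\delta}_{\op{G}})\,\xrightarrow{\sim}\,\pi_1(J^{\alpha}_{\op{Q}}(X))$.

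The main step that requires care is the identification of the fiber $F$. The \'etale local triviality is quoted as a black box in the introduction, so one either needs to unwind its construction to verify that $F$ is (a finite \'etale cover of) a product $\prod_i \text{M}^{\delta_i}_{\widetilde{\op{G}}_i}$, or argue indirectly: show that $F$ is dominated by such a product and then transfer simple connectedness via the unirationality-plus-fixed-point argument from the Second Proof of Theorem \ref{thm1}. Once $F$ is known to be connected and simply connected, the remainder of the proof is formal.
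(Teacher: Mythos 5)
Your overall skeleton coincides with the paper's: $\widetilde{q}$ is \'etale locally trivial, hence a fibration in the analytic topology, and the homotopy exact sequence reduces the corollary to showing that the fiber is connected and simply connected. The genuine gap is in your identification of the fiber $F$. Fixing the image in $J^{\alpha}_{\op{Q}}(X)$ does not let you replace $[\op{G},\op{G}]$ by its universal cover: the fiber is a twisted moduli space of semistable $[\op{G},\op{G}]$--bundles (this is how the paper describes it), and when $[\op{G},\op{G}]$ is not simply connected this is a finite quotient of $\prod_i{\rm M}^{\delta_i}_{\widetilde{\op{G}}_i}$ by $H^1(X,\pi_1([\op{G},\op{G}]))$ acting \emph{with fixed points} --- it is neither an \'etale cover of that product nor \'etale-covered by it. Concretely, for $\op{G}=\op{PGL}(n)\times{\mathbb G}_m$ the map $\widetilde{q}$ is the projection ${\rm M}^{\delta_1}_{\op{PGL}(n)}\times \op{Pic}^d(X)\longrightarrow \op{Pic}^d(X)$, so $F={\rm M}^{\delta_1}_{\op{PGL}(n)}$; this is the quotient of the fixed--determinant $\op{SL}(n)$--moduli space by the $n$--torsion of the Jacobian and is not isomorphic to it (for $n=2$, $g=2$ and trivial type these are ${\mathbb P}^3/J(X)[2]$, which is singular, versus ${\mathbb P}^3$), and, being simply connected by Theorem \ref{thm1}, it admits no nontrivial connected \'etale covers at all. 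So Proposition \ref{prop1} cannot be applied to $F$ as you propose; note also that the hedge ``possibly after a finite \'etale cover'' would in any case not yield $\pi_1(F)=1$ unless the cover were trivial.

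Your fallback --- regard $F$ as dominated by the product and transfer simple connectedness via the unirationality/fixed--point argument --- is the correct repair, and it is in substance what the paper does: it identifies $F$ as a twisted moduli space of $[\op{G},\op{G}]$--bundles, lets $\Gamma=H^1(X,\op{Z})$ act on it ($\op{Z}$ the center of $[\op{G},\op{G}]$), and reduces its simple connectedness to Corollary \ref{cor3}, whose proof through Theorem \ref{thm1} is exactly the Armstrong fixed--point argument. But domination by a simply connected variety does not by itself kill $\pi_1(F)$; to invoke \cite[p.~299, Theorem]{Am} you must first establish the precise description of $F$ as a quotient of $\prod_i{\rm M}^{\delta_i}_{\widetilde{\op{G}}_i}$ by a finite abelian group of twists and then produce a fixed point for each prime--order element --- which is exactly the identification you left unverified. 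As written, therefore, the key step of the proposal fails, even though the surrounding formal fibration argument is correct and agrees with the paper's.
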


\begin{proof}
Let $\op{Z}$ denote the center of $[\op{G},\, \op{G}]$. The moduli space
of principal $\op{Z}$--bundles on $X$ will be denoted by $\Gamma$.

The projection $\widetilde{q}$ in \eqref{y3} is surjective. It can
be shown that $\widetilde{q}$ is \'etale locally trivial as follows. For that, let
$\op{Z}_0\, \subset\, \op{Z}$ be the connected component containing the
identity element. Let
$$
q'\, :\, \op{G}\, \longrightarrow\, \op{G}/\op{Z}_0
$$
be the natural projection. Let
$$
\alpha'\, :=\, q'_*(\delta) \, \in\, \pi_1(\op{G}/\op{Z}_0)
$$
be the image under the homomorphism $q'_*\, :\, \pi_1(\op{G}) \, \longrightarrow\, \pi_1(\op{G}/\op{Z}_0)$
induced by $q'$. Let ${\rm M}^{\alpha'}_{\op{G}/\op{Z}_0}$ be the corresponding moduli space
of semistable principal $\op{G}/\op{Z}_0$--bundles on $X$. Let
$$
\varpi\, :\, {\rm M}^{\delta}_{\op{G}} \, \longrightarrow\, {\rm M}^{\alpha'}_{\op{G}/\op{Z}_0}
\times J^{\alpha}_{\op{Q}}(X)
$$
be the morphism of moduli spaces corresponding to the surjective homomorphism
$$
\op{G}\, \longrightarrow\, (\op{G}/\op{Z}_0)\times (\op{G}/[\op{G},\, \op{G}])\, ,\ \
z\, \longmapsto\, (q'(z),\, q(z))\, .
$$
It is straightforward to check that
\begin{equation}\label{et}
\widetilde{q}\,=\, p_2\circ \varpi\, ,
\end{equation}
where $p_2\, :\, {\rm M}^{\alpha'}_{\op{G}/\op{Z}_0}
\times J^{\alpha}_{\op{Q}}(X)\, \longrightarrow\, J^{\alpha}_{\op{Q}}(X)$ is the natural
projection, and $\widetilde{q}$ is the map in \eqref{y3}.

Consider the finite abelian group $\op{Z}_1\,:=\, [\op{G},\, \op{G}]\bigcap \op{Z}_0\, \subset\, \op{G}$. Let
${\rm M}_{\op{Z}_1}$ be the group of principal $\op{Z}_1$--bundles on $X$. The group ${\rm M}_{\op{Z}_1}$
acts on ${\rm M}^{\delta}_{\op{G}}$, and this action of ${\rm M}_{\op{Z}_1}$ on ${\rm M}^{\delta}_{\op{G}}$
takes any fiber of $\widetilde{q}$ to itself. In fact, we have
$$
\widetilde{q}^{-1}(t)/{\rm M}_{\op{Z}_1}\,=\, {\rm M}^{\alpha'}_{\op{G}/\op{Z}_0}\, .
$$
Therefore, from \eqref{et} it is deduced that $\widetilde{q}$ is \'etale locally trivial.

Hence by Corollary \ref{cor3}, we get that $\op{M}$ is simply connected. Now from the long exact
sequence of homotopy groups associated to the fiber bundle in \eqref{y3} we conclude
that the homomorphism $\widetilde{q}_*$ is an isomorphism.
\end{proof}

\begin{remark}\label{remf}
Take $\mathcal G$ to be any connected complex affine algebraic group.
Let $\op{G}$ be the quotient of $\mathcal G$ by the unipotent radical of $\mathcal G$, so $\op{G}$ is a
connected complex reductive affine algebraic group. For any $\delta\, \in\, \pi_1({\mathcal G})\,=\,
\pi_1(\op{G})$, the natural projection 
$$
{\rm M}^{\delta}_{\mathcal G}\, \longrightarrow\, {\rm M}^{\delta}_{\op{G}}
$$
is surjective with contractible fibers, in particular this map
${\rm M}^{\delta}_{\mathcal G}\, \longrightarrow\, {\rm M}^{\delta}_{\op{G}}$
induces an isomorphism of fundamental groups. Consequently, Theorem
\ref{thm0} computes the fundamental group of ${\rm M}^{\delta}_{\mathcal G}$.
\end{remark}

\section*{Acknowledgements}

We thank the two referees for very helpful comments. We are very grateful to Behrang Noohi for Lemma 
\ref{topostackcomm}. We thank S. Lawton and D. Ramras for making us aware of the question addressed here. We thank 
the Institute for Mathematical Sciences in the National University of Singapore for hospitality while this work was 
being completed. The first author is supported by a J. C. Bose Fellowship. The second author was supported in part 
by a Simons Travel Grant and by NSF grant DMS-1361159 (PI: Patrick Brosnan) and also by the Science and Engineering 
Research Board, India (SRG/2019/000513). The first and second author were also supported by the Department of 
Atomic Energy, India, under project no. 12-R\&D-TFR-5.01-0500.

\end{document}